\theoremstyle{plain}
\newtheorem{theorem}{Theorem}[section]
\newtheorem{proposition}[theorem]{Proposition}
\newtheorem{lemma}[theorem]{Lemma}
\newtheorem{corollary}[theorem]{Corollary}
\newtheorem{maintheorem}{Theorem}
\newcommand{\cmt}{\begin{maintheorem}}
\newcommand{\fmt}{\end{maintheorem}}
\newtheorem{maincorollary}[maintheorem]{Corollary}
\newcommand{\cmc}{\begin{maincorollary}}
\newcommand{\fmc}{\end{maincorollary}}
\theoremstyle{remark}
\newtheorem{remark}[theorem]{Remark}
\newcommand{\topo}{\operatorname{top}}
\begin{document}
	
	\title{Equilibrium states for hyperbolic potentials via inducing schemes}
	
	
	
\author[J. F. Alves]{Jos\'{e} F. Alves}
\address{Jos\'{e} F. Alves\\ Departamento de Matem\'{a}tica\\ Faculdade de Ci\^encias da Universidade do Porto\\ Rua do Campo Alegre 687\\ 4169-007 Porto\\ Portugal}
\email{jfalves@fc.up.pt} \urladdr{http://www.fc.up.pt/cmup/jfalves}
	
	\author[K. Oliveira]{Krerley Oliveira}
	\address{Krerley Oliveira, Instituto de Matem\'atica, Universidade Federal de Alagoas, 57072-090 Macei\'o, Brazil} \email{krerley@gmail.com}
	
	\author[E. Santana]{Eduardo Santana}
	\address{Eduardo Santana, Universidade Federal de Alagoas, 57200-000 Penedo, Brazil}
	\email{jemsmath@gmail.com}
	
	\dedicatory{Dedicated to Jacob Palis for his contribution to Dynamical Systems}
	
	\subjclass[2020]{37D35, 37D99}
	\keywords{Expanding measures, equilibrium states, nonuniform expansion, measures of maximal entropy, inducing schemes}

	\thanks{J.F. Alves   was partially supported by CMUP (UID/MAT/00144/2019) and
PTDC/MAT-PUR/4048/2021, which are funded
by FCT (Portugal) with national (MEC) and European structural funds
through the program  FEDER, under the partnership agreement PT2020.}

	\date{\today}

	
	
	\maketitle
	
	\begin{abstract} 
		In a context of non-uniformly expanding maps, possibly with the presence of a critical set, we prove the existence of finitely many ergodic equilibrium states 
		for hyperbolic potentials. Moreover, the equilibrium states are expanding measures. This generalizes a result due to Ramos and Viana, where  analytical methods are used for maps with no critical sets. 
		The strategy here consists in using a finite number of   inducing schemes with a  Markov structure
		in infinitely many symbols to code the dynamics, to obtain an equilibrium state for the associated symbolic dynamics and then projecting it to obtain an equilibrium state for the original map. We apply our results to the important class of multidimensional  Viana maps.
	\end{abstract}

	\bigskip

	\tableofcontents


	\section{Introduction}
	
	The theory of equilibrium states on dynamical systems was firstly developed by Sinai, Ruelle and Bowen in the sixties and seventies. 
	It was based on applications of techniques of Statistical Mechanics to smooth dynamics.
	Given a continuous map $f: M \to M$ on a compact metric space $M$ and a continuous potential $\phi : M \to \mathbb{R}$, an  {\textit{equilibrium state}} is an 
	invariant measure that satisfies a variational principle, that is, a measure $\mu$ such that
	$$
	\displaystyle h_{\mu}(f) + \int \phi d\mu = \sup_{\eta \in \mathcal{M}_{f}(M)} \bigg{\{} h_{\eta}(f) + \int \phi d\eta \bigg{\}},
	$$
	where $\mathcal{M}_{f}(M)$ is the set of $f$-invariant probabilities on $M$ and $h_{\eta}(f)$ is the so-called metric entropy of $\eta$.

	In the context of uniform hyperbolicity, which includes uniformly expanding maps, equilibrium states do exist and are unique if the potential is H\"older continuous
	and the map is transitive. 
	Beyond uniform hyperbolicity, the theory is still far from complete and	 was studied by several authors, including Bruin, Keller, Demers, Li, Rivera-Letelier, Iommi and Todd 
	\cite{BT,BK,DT,IT1,IT2,LRL} for interval maps; Denker and Urbanski  \cite{DU} for rational maps; Leplaideur, Oliveira and Rios 
	\cite{LOR} for partially hyperbolic horseshoes; Buzzi, Sarig and Yuri \cite{BS,Y}, for countable Markov shifts and for piecewise expanding maps in one and higher dimensions. 
	For local diffeomorphisms with some kind of non-uniform expansion, there are results due to Oliveira \cite{O}; Arbieto, Matheus and Oliveira \cite{AMO};
	Varandas and Viana \cite{VV}. All of whom proved the existence and uniqueness of equilibrium states for potentials with low oscillation. Also, for this type of maps,
	Ramos and Viana  \cite{RV}  proved it for potentials so-called  {\textit{hyperbolic}}, which includes the previous ones. The hyperbolicity of the potential is
	characterized by the fact that the pressure emanates from the hyperbolic region.
	
	The conclusion of our main result is similar to that of \cite{RV}, but while the results  in~\cite{RV} are specific to local homeomorphisms, we allow transformations with critical sets. Our strategy is completely different, 
	since we do not use the analytical approach of the transfer operator in order to obtain conformal measures. In Section~\ref{Lifted}, we use results on countable Markov shifts by Sarig for the ``coded'' dynamics in inducing schemes given by Pinheiro in~\cite{Pi1}, where a Markov structure is constructed. 
	By using such a structure, we lift ergodic probabilities  which have pressure close to the topological pressure of the original map (the hyperbolicity of the potential help us to prove that such probabilities are expanding measures) and use the thermodynamic formalism for shifts to obtain a unique equilibrium state for the associated shift map. In Section~\ref{finiteness}, we prove that the inducing time is integrable, in order to project the equilibrium state for the shift map to the original system. The Abramov's formula guarantees that the projected probability is an equilibrium state. 
	We prove that there exist finitely many ergodic equilibrium states that are expanding measures by using the fact that there exists at most a unique equilibrium state for each inducing scheme and that they are finitely many in the Markov structure.
	
	In the last section, we give a class of potentials that we call \text{expanding} and show that this class is equivalent to the class of hyperbolic potentials. This class was inspired by the work \cite{PV}, where the authors consider a type of potentials that they call expanding and they announced existence and uniqueness of measure of maximal entropy for Viana maps.  Moreover, by using a result for Viana maps in \cite{ALP}, where the authors announced the existence of countably many ergodic measures of maximal entropy, we show that the null potential is expanding if the $SRB$ measure is not a measure of maximal entropy, and so is hyperbolic. Also, it implies that the potentials whose integral with respect to every invariant measure vanishes are expanding (and hyperbolic). If the SRB measure is a measure of maximal entropy, we  can still use the strategy of the proof of Theorem \ref{A} to obtain finitely many ergodic measures of maximal entropy. This approach shows the main novelty of our work when comparing to \cite{RV}, because we give an example of setup with critical set where our theory can be applied.
	
	
	
	From now on, consider  $M$ a connected compact metric space and $f:M \to M$ a continuous map. We say that $x\in M$ is a \emph{critical point} for $f$ if there is no neighbourhood $V$ of $x$ in $M$ such that the restriction of $f$ to $V$ is a homeomorphism onto its image.

	\subsection{Topological pressure}
	
	We recall the definition of relative pressure for non-compact sets by means of dynamical balls, for a potential  $\phi: M \to \mathbb{R}$. Given $\delta > 0$, $n \in \mathbb{N}$ and $x \in M$, we define the 
	{\textit{dynamical ball}} $B_{\delta}(x,n)$ as 
	$$
	B_{\delta}(x,n) = \{y \in M \mid d(f^{i}(x),f^{i}(y)) < \delta, \,\, \text{for} \,\, 0 \leq i \leq n\}.
	$$
	Consider for each $N \in \mathbb{N}$, the set
	$$
	\mathcal{F}_{N} = \left\{B_{\delta}(x,n) \mid x \in M\text{ and } n \geq N\right\}.
	$$
	Given an $f$-invariant set $\Lambda \subset M$, not necessarily compact, denote by $\mathcal{F}_{N}(\Lambda)$ the finite or countable families of elements   in $ \mathcal{F}_{N}$ 
	that cover $\Lambda$.
	Define for  $x\in M$ and   $n \in \mathbb{N}$
	$$
	S_{n}\phi(x) = \phi(x) + \phi(f(x)) + \dots + \phi(f^{n-1}(x)),
	$$
	and
	$$
	\displaystyle R_{n,\delta}\phi(x) = \sup_{y \in B_{\delta}(x,n)} S_{n}\phi(y).
	$$
	Define for each $\gamma \in\mathbb R$
	$$
	\displaystyle m_{f}(\phi, \Lambda, \delta,   \gamma, N) = \inf_{\mathcal{U} \in \mathcal{F}_{N}(\Lambda)} \left\{ \sum_{B_{\delta}(x,n) \in \mathcal{U}}
	e^{-\gamma n + R_{n,\delta}\phi(x)} \right\}.
	$$
	Define also
	$$
	\displaystyle m_{f}(\phi, \Lambda, \delta, \gamma) = \lim_{N \to + \infty} m_{f}(\phi, \Lambda, \delta,   \gamma,N)
	$$
	and
	$$
	P_{f}(\phi, \Lambda, \delta) = \inf \{\gamma \in\mathbb R \mid m_{f}(\phi, \Lambda, \delta, \gamma) = 0\}.
	$$
	Finally, define the {\textit{relative pressure}} of $\phi$ on $\Lambda$ as
	$$
	P_{f}(\phi,\Lambda) = \lim_{\delta \to 0} P_{f}(\phi, \Lambda, \delta).
	$$
	The {\textit{topological pressure}} of $\phi$ is, by definition, $P_{f}(\phi) = P_{f}(\phi, M)$. It satisfies
	\begin{eqnarray}
		\label{Pressures}
		P_{f}(\phi) = \sup \{P_{f}(\phi,\Lambda), P_{f}(\phi,\Lambda^{c})\},
	\end{eqnarray}
	where $\Lambda^{c}$ denotes the complement of $\Lambda$ on $M$. We refer the reader to \cite[Theorem 11.2]{Pe2}   for the proof of~\eqref{Pressures} and for additional properties of the pressure.
	
	\subsection{Expanding measures and hyperbolic potentials}


	Given $0<\sigma<1$ and $ \varepsilon > 0$,
	we say that $n \in \mathbb{N}$ is a $(\sigma, \varepsilon)$-{\textit{hyperbolic time}} for $x\in M$ if
	\begin{itemize}
		\item there exists a neighbourhood $V_{n}(x)$ of $x$ such that $f^{n}$ maps  $ V_{n}(x)$ homeomorphically onto the ball $ B_{\varepsilon}(f^{n}(x))$;
		\item $d(f^{i}(y),f^{i}(z)) \leq \sigma^{n-i}d(f^{n}(y),f^{n}(z))$, for all $y,z \in V_{n}(x)$ and all $0 \leq i \leq n-1.$
	\end{itemize}
	We say that $x \in M$ has {\textit{positive frequency}} of $(\sigma,\varepsilon)$-hyperbolic times if
	$$
	d_{\sigma,\varepsilon}(x):=\limsup_{n \to \infty}\frac{1}{n}\#\{0 \leq j \leq n-1 \,\, | \,\, j \,\, \text{is a} \,\, (\sigma,\varepsilon)\text{-hyperbolic time for} \,\, x\} > 0,
	$$
	and define the {\textit{expanding set}}
	$$
	H (\sigma,\varepsilon) = \{x \in M \mid  d_{\sigma,\varepsilon}(x)>0\}.
	$$
	We say that a Borel probability  measure $\mu$ (not necessarily $f$-invariant) on $M$   is {\textit{expanding}} if $\mu(H (\sigma,\varepsilon))=1$, and that a   continuous function $\phi : M \to \mathbb{R}$ is a \emph{$(\sigma,\varepsilon)$-hyperbolic potential} if   the topological pressure $P_{f}(\phi)$ is located on $H(\sigma,\varepsilon)$, i.e.
	$$P_{f}(\phi,H(\sigma,\varepsilon)^{c}) < P_{f}(\phi).$$
	%
	%
	%
	%
	%
	Now, we state our first main result on the existence and finiteness of equilibrium states.
	\begin{maintheorem}
		\label{A}
		Let $f:M \to M$ be a continuous map  and   $\phi:M \to \mathbb{R}$ be a H\"older continuous potential with $P_{f}(\phi)<\infty $. If there exist $0<\sigma<1$ and $\varepsilon>0$ such that $\phi$ is  a $(\sigma,\varepsilon)$-hyperbolic potential,  then
		there exists some ergodic equilibrium state for $\phi$. Moreover, the number of  these  equilibrium states is finite and they are all expanding measures. 
	\end{maintheorem}
	
	 \subsection{Expanding potentials}\label{expanding} Let $\phi:M \to \mathbb{R}$ be a continuous function. We say that $\phi$ is a $(\sigma,\epsilon)$-\textit{expanding potential} if the following inequality holds
	\[
	\displaystyle \sup_{\mu \in \mathcal{E}^{c}} \bigg{\{} h_{\mu}(f) + \int \phi d \mu \bigg{\}} < \sup_{\mu \in \mathcal{E}} \bigg{\{} h_{\mu}(f) + \int \phi d \mu \bigg{\}} = P_{f}(\phi),	
	\]
	where $\mathcal{E}$ denotes the set of all expanding measures for $f$. It means that the topological pressure $P_{f}(\phi)$ is concentrated at the expanding measures.

Our second main theorem establishes an equivalence between the class of $(\sigma,\epsilon)$-hyperbolic potentials and the class of $(\sigma,\epsilon)$-expanding potential.
	
	 \begin{maintheorem}\label{B}	Let $f:M \to M$ be a continuous map  and $\phi : M \to \mathbb{R}$ a continuous potential. Then $\phi$ is a $(\sigma,\epsilon)$-hyperbolic potential if, and only if, $phi$  is an $(\sigma,\epsilon)$-expanding potential. In particular, if $\phi$ is $(\sigma,\epsilon)$-hyperbolic and H\"older with finite pressure $P_{f}(\phi)$, there exist finitely many equilibrium states which are expanding measures.
	\end{maintheorem}
 
 \subsection{Viana maps} We apply Theorems \ref{A} and \ref{B}  to an  open  class of   nonuniformly expanding maps with critical points  defined in the two-dimensional cylinder $S^1\times \mathbb R$. These maps have been   
introduced by Viana in~\cite{V} and have natural extensions to  higher dimensions; see \cite[Section 2.5]{V}. 

 Let $a_0\in(1,2)$ be a parameter close to 2 such that the critical point $x=0$ is pre-periodic for the quadratic map $q(x)=a_0-x^2.$  Consider the circle $S^1=\mathbb R/\mathbb Z$ and $b:S^1\rightarrow \mathbb R$ a Morse function, for instance, $b(s)=\sin(2\pi s)$. For small $\alpha>0$, define
\begin{equation}\label{eq.viana}
	\begin{array}{rccc} \hat f: & S^1\times\mathbb R,
		&\longrightarrow & S^1\times \mathbb R\\
		& (s, x) &\longmapsto & \big(\hat g(s),\hat q(s,x)\big),
	\end{array}
\end{equation}
 where $\hat g:S^1\to S^1$ is the uniformly expanding map defined, for some $d\ge16$, by 
$\hat{g}(s)=ds$ (mod 1) and $$\hat q(s,x)=a(s)-x^2,$$ with 
$$a(s)=a_0+\alpha b(s).$$
It is easily verified  that, for any small $\alpha>0$, there exists an interval $I\subset (-2,2)$ for which $\hat f(S^1\times I)$ is contained in the interior of $S^1\times I$. This implies that  any   $f$ sufficiently close to $\hat f$ in the $C^0$ topology still has $S^1\times I$  as a forward invariant region. 
We define the family~$\mathcal V$ of \emph{Viana maps}\index{Viana!map} as the set of $C^3$ maps in a sufficiently small neighbourhood of $\hat f:S^1\times I\to S^1\times I$ in the  $C^2$ topology.

From the expression in~\eqref{eq.viana} we easily see that $\{x=0\}$ is a critical set for $\hat f$. By an implicit function argument we can easily see that a critical curve close to $\{x=0\}$ still exists for $f\in\mathcal V$. The skew-product expression in~\eqref{eq.viana}  gives that the vertical lines form an invariant 
for $\hat f$.  An invariant foliation by nearly vertical lines still exists for any $f\in\mathcal V$; see 	\cite[Section 2.5]{V}. For any $f\in\mathcal V$, the dynamics in the space of these invariant foliations is driven by an expanding map in the circle with a fixed point $p_f\in S^1$. This gives rise to a nearly vertical invariant leave on which the dynamics is close to the quadratic map $Q$, and so it has a repelling fixed point $q_f\in I$. Clearly,  $\Lambda_f=\{(p_f,q_f)\}$ is a  repelling fixed point for $f$.
Our third main theorem is an application of Theorems \ref{A} and \ref{B} together. 
    \begin{maintheorem}\label{C}
	Let $f:S^{1} \times I \to S^{1} \times I$ be a Viana map. If the unique $SRB$ measure is not a measure of maximal entropy, we have that the null potential is expanding and, by Theorem~\ref{B} it is hyperbolic. Moreover, by Theorem \ref{A} we have existence of some and at most finitely many ergodic measures of maximal entropy,  all expanding measures. 
   \end{maintheorem}

	\section{Equilibrium states for   lifted dynamics}\label{Lifted}
	
	In this section we begin the proof of Theorem A. It will be divided into several steps. The strategy is to lift the dynamics to a Markov structure,
	finding equilibrium states for the induced potential and then projecting them.

	\subsection{Inducing schemes} An inducing scheme for   $f:M\to M$ on an open set $U$ with inducing time $\tau: U \rightarrow \mathbb{N}$ is a pair $(F,\mathcal{P})$, where
	\begin{itemize}
		\item $\mathcal{P} = \{P_{1}, P_{n},\dots\}$ is a partition by measurable subsets of $U$ with nonempty interior 
		such that $\tau|_{Q_i}\equiv \tau_i$ is constant, for all $ i\ge1$; 
	    \item   $F:U \to U$ is a map defined by $F(x)=f^{\tau(x)}(x)$ is a homeomorphism from the interior of each $Q_i$ 
	     onto~$U$.
	\end{itemize}
	Given an inducing scheme $(F,\mathcal{P})$ and an ergodic $f$-invariant probability $\mu$, we say that $\mu$ is \emph{liftable} to $(F, \mathcal{P})$ if there exists an $F$-invariant finite measure 
	$\overline{\mu} \ll \mu$ such that for every measurable set $A \subset M$, 
	$$
	\mu(A) = \sum_{k=1}^{\infty}
	\sum_{j=0}^{\tau_{k} - 1} \overline{\mu}(f^{-j}(A) \cap P_{k}).
	$$	
	Inversely, given an $F$-invariant probability $\overline{\eta}$ we define its \textit{projection} $\eta$ by  
	\begin{equation}\label{eq.projecao}
	\eta(A) = \sum_{k=1}^{\infty}
	\sum_{j=0}^{\tau_{k} - 1} \overline{\eta}(f^{-j}(A) \cap P_{k}).
	\end{equation}
	It easy to see that $\eta$ is a $f$-invariant probability measure and $\eta(M) = \int \tau d\overline{\eta}$. Given a potential $\phi : M \to \mathbb{R}$ we define the \emph{induced potential} $\overline{\phi}$ by 
	$$
	\overline{\phi}(x) =\sum_{j=0}^{\tau(x)-1} \phi(f^{j}(x)).
	$$	
	The next result establishes \emph{Abramov's Formulas}.
	
	\begin{proposition}[Zweim\" uller \cite{Z}]\label{pr.zwei} If $\mu$  is liftable to $\overline{\mu}$, then  
		$$
		h_{\mu}(f) = \frac{h_{\overline{\mu}}(F)}{\int \tau d \overline{\mu}} \quad\text{and}\quad \int \phi d \mu = \frac{\int \overline{\phi} d\overline{\mu}}{\int \tau d \overline{\mu}}.
		$$
	\end{proposition}

	We say that an inducing scheme $(F,\mathcal{P})$ defined on an open set $Y$ is \textit{compatible} with a measure $\mu$ if it holds that
	\begin{itemize}
		\item $\mu(Y) > 0$;
		
		\item $F$ is non-singular (that is, $F_{*}\mu \ll \mu$ where $F_{*}\mu = \mu \circ F^{-1}$);
		
		\item $\mu(\cup_{P \in \mathcal{P}} P) = \mu(Y)$.
	\end{itemize}
	Our goal is to explore the established dictionary between the equilibrium states of $\phi$ and the equilibrium states of $\overline{\phi}$. The next result  assures that every ergodic expanding measure can be lifted to some inducing scheme.
	
	\begin{theorem}[Pinheiro \text{\cite[Theorems 1 and D]{Pi1}}] 
		\label{Pinheiro}
		There exist open sets $U_1,\dots, U_s$ and $K>0$ such that, for each $1\le i\le s$, there exists an  inducing scheme  $(F_i,\mathcal{P}_i)$  on $U_i$ with induced time $\tau_i: U_i \to \mathbb{N}$ and, given any ergodic probability measure $\mu$ with $\mu(H(\sigma,\epsilon))=1$,
			\begin{enumerate}
			 \item  there exists $1\leq i\leq s$ such that $\mu$ is liftable to some  $F_i$-invariant measure $\overline{\mu}_i$;
			 \item $\int \tau_i d\,\overline{\mu}_i \le K$;
			 \item $(F_{i},\mathcal{P}_{i})$ is compatible with $\mu_{i}$.
		   \end{enumerate}
	\end{theorem}
	
	\begin{remark}\label{U_infty}
		When an inducing scheme $(F,\mathcal{P})$ is compatible with an invariant measure $\mu$, we have that $\overline{\mu}$ is also invariant and it holds that
		\[
		\overline{\mu}\bigg{(}\bigcap_{i=0}^{\infty} F^{-i}\bigg{(}\bigcup_{P \in \mathcal{P}} P \bigg{)}\bigg{)} = \overline{\mu}\bigg{(}\bigcup_{P \in \mathcal{P}} P \bigg{)} = \overline{\mu}(Y)
		\]
	\end{remark}
	
	\subsection{Hyperbolic potentials}
	Given an invariant measure~$\mu$ and a potential $\phi: M \to \mathbb{R}$, we say that the sum
	\[
	h_{\mu}(f) + \int \phi d\mu
	\]
is the \textit{free energy} or \textit{pressure of the measure} of the system $(f,\mu,\phi)$.
The next proposition and Theorem \ref{Pinheiro} guarantee that every ergodic expanding measure with \textit{``high free energy''} and, in particular, those which are candidates 
to be equilibrium states, can be lifted to some inducing scheme.

\begin{proposition}\label{pr.hypmeas}
	Let $\phi$ be  a $(\sigma,\epsilon)$-hyperbolic potential. If $\mu$ is an ergodic probability measure such that $h_{\mu}(f) + \int \phi d\mu > P_{f}(\phi,H(\sigma,\epsilon)^{c})$, then $\mu(H(\sigma,\epsilon))=1$.
\end{proposition}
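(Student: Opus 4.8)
The plan is to use ergodicity to reduce the claim to a dichotomy for $\mu(H)$, and then to rule out the bad alternative by means of a relative variational principle for the pressure, where it is the hypothesis on $\mu$ that does the work. The first step is to record that the expanding set $H$ is a Borel $f$-invariant set. It is Borel since $x\mapsto\limsup_{n}\frac1n\#\{0\le j\le n-1: j\text{ is a hyperbolic time for }x\}$ is a $\limsup$ of Borel functions. It is invariant by the standard nesting of hyperbolic times: if $n\ge1$ is a $(\sigma,\delta)$-hyperbolic time for $x$, then $n-1$ is one for $f(x)$, with $V_{n-1}(f(x))=f(V_n(x))$; hence the frequency of hyperbolic times cannot decrease along orbits and $f(H)\subseteq H$. (If $f$ is not a local homeomorphism near its critical set, one gets only this forward invariance; then I would pass to the fully invariant hull $H^{*}=\bigcup_{k\ge0}f^{-k}(H)$, which satisfies $\mu(H^{*})=\mu(H)$ by $f$-invariance of $\mu$, and use that $(H^{*})^{c}\subseteq H^{c}$ together with the monotonicity $\Lambda_1\subseteq\Lambda_2\Rightarrow P_f(\phi,\Lambda_1)\le P_f(\phi,\Lambda_2)$.) Since $\mu$ is ergodic, $\mu(H)\in\{0,1\}$.

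Now assume, for contradiction, that $\mu(H)=0$, i.e. $\mu(H^{c})=1$. Then $H^{c}$ is an $f$-invariant Borel set that fully carries the ergodic measure $\mu$, so the easy half of the relative variational principle for the pressure defined via dynamical balls (see \cite{ARS,Pe2}) gives
\[
h_\mu(f)+\int\phi\,d\mu\ \le\ P_f(\phi,H^{c}).
\]
This contradicts the standing hypothesis $h_\mu(f)+\int\phi\,d\mu>P_f(\phi,H^{c})$. Hence $\mu(H)=0$ is impossible and $\mu(H)=1$, as asserted. (Via \eqref{Pressures} and the definition of a hyperbolic potential, $P_f(\phi)=\max\{P_f(\phi,H),P_f(\phi,H^{c})\}>P_f(\phi,H^{c})$, which is exactly why this proposition applies to the measures whose free energy $h_\mu(f)+\int\phi\,d\mu$ is close to $P_f(\phi)$ — in particular to candidate equilibrium states.)

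The only substantive ingredient is the displayed inequality: for an ergodic $\mu$ with $\mu(\Lambda)=1$ and $\Lambda$ Borel invariant, $h_\mu(f)+\int\phi\,d\mu\le P_f(\phi,\Lambda)$. I expect this to be the one point that needs genuine care, and I would quote it rather than reprove it; its proof is a Katok-type estimate: by the Shannon--McMillan--Breiman and Birkhoff theorems, for large $n$ a subset of $\Lambda$ of $\mu$-measure close to $1$ is covered by about $e^{n(h_\mu(f)-\varepsilon)}$ dynamical balls $B_\delta(x,n)$ on each of which $S_n\phi$ is within $n\varepsilon$ of $n\int\phi\,d\mu$, which forces $P_f(\phi,\Lambda)\ge h_\mu(f)+\int\phi\,d\mu-2\varepsilon$ through the Carath\'eodory construction defining $m_{f}(\phi,\Lambda,\delta,N,\gamma)$. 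Everything else — the measurability and invariance bookkeeping for $H$, and the passage to $H^{*}$ if needed — is routine.
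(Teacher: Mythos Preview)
Your proof is correct and follows essentially the same line as the paper's: reduce to the dichotomy $\mu(H)\in\{0,1\}$ by ergodicity and invariance of $H$, then rule out $\mu(H^{c})=1$ via the relative variational inequality $h_{\mu}(f)+\int\phi\,d\mu\le P_{f}(\phi,H^{c})$, which the paper simply quotes from \cite[Theorem A2.1]{Pe2}. Your version is more explicit about the Borel/invariance bookkeeping for $H$ and about why the relative variational inequality holds, but the core argument is identical.
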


\begin{proof}
	Since $H(\sigma,\epsilon)$ is an invariant set and $\mu$ is an ergodic probability measure, we have $\mu(H(\sigma,\epsilon)) = 0$ or $\mu(H(\sigma,\epsilon)) = 1$. Since the potential $\phi$ is $(\sigma,\epsilon)$-hyperbolic, we get
	$$
	h_{\mu}(f) + \int \phi d\mu > P_{f}(\phi,H(\sigma,\epsilon)^{c}) \geq \sup_{\nu(H(\sigma,\epsilon)^{c})=1}
	\bigg{\{}h_{\nu}(f) + \int_{H(\sigma,\epsilon)^{c}} \phi d\nu\bigg{\}};
	$$
	for the second inequality, see  \cite[Theorem A2.1]{Pe2}.
	So, we cannot have $\mu(H(\sigma,\epsilon)^{c}) = 1$ and we obtain $\mu(H(\sigma,\epsilon)) = 1$, which means that $\mu$ is an expanding measure.
\end{proof}

\begin{corollary}\label{fixed}
Given a $(\sigma,\epsilon)$-hyperbolic potential $\phi$, there exist an inducing scheme $(F,\mathcal P)$ and a sequence $(\mu_n)_n$ of ergodic probability measures  liftable to $(F,\mathcal P)$  such that  $$h_{\mu_n}(f) + \int \phi d\mu_n\to P_f(\phi).$$
\end{corollary}
\begin{proof}
	We can choose a sequence of ergodic probabilities $(\mu_n)_n$ such that
	$$h_{\mu_n}(f) + \int \phi d\mu_n\to P_f(\phi).$$
	Since Theorem \ref{Pinheiro} guarantees finitely many inducing schemes for which these measures are liftable and by Proposition \ref{pr.hypmeas} there exists $n_{0}$ such that $\mu_{n}(H(\sigma,\epsilon)) = 1,$ for all $ n \geq n_{0}$, by choosing a subsequence if necessary, we can find an inducing scheme $(F,\mathcal{P})$ with respect to which the measures are liftable. 
\end{proof}

From now and on, we fix an inducing scheme $(F,\mathcal{P})$ given by Corollary \ref{fixed}.

	\subsection{Thermodynamic formalism on inducing schemes} \label{MarkovShifts} 
	Here we code the dynamics of  the inducing scheme $(F,\mathcal{P})$  using  a countable Markov shift and present some useful  results,  most of them  due to Sarig.
	
 Consider the space of symbols $S=(P_i)_{i\geq 1}$, where $P_1,P_2,\dots$ are the elements of the partition $\mathcal P$, and the full shift  $\sigma: S^{\mathbb{N}} \to S^{\mathbb{N}}$.  We  define  $U_\infty \subset U$ as the subset of points $x$ such that $F^n(x)$ is defined for every $n\geq 0$,~i.e.

	\[
	\displaystyle U_\infty = \bigcap_{k=0}^{\infty} F^{-k}\bigg{(}\bigcup_{i=1}^{\infty}P_{i}\bigg{)}.
	\]	
	By Remark \ref{U_infty}, we have  $\mu(U_{\infty}) = 1$, for every measure $\mu$ such that $(F,\mathcal{P})$ is compatible with~$\mu$. 
	Since $F$ expands distances at a uniform rate, it is easy to check that  $\pi: \Sigma \rightarrow U_\infty$, defined for  $\overline{a}=(P_{a_0},P_{a_1},\dots)$
	by  
	$$
	\pi(\overline{a})=\bigcap_{i\geq 0} F^{-i}(P_{a_i}),
	$$  is a topological conjugacy. Observe that $\pi$   preserves ergodicity and entropy between invariant measures of $F$ and invariant measures of $\sigma$. Indeed, we may consider the map 
	\begin{eqnarray*}
		\pi^{*}: \mathcal{M}_{F}(U_{\infty})& \to & \mathcal{M}_{\sigma}(\Sigma) \\
		\overline{\mu} & \to & \overline{\mu}\circ \pi
	\end{eqnarray*}
	defined by $\pi^{*}(\overline{\mu})(A) = \overline{\mu}(\pi(A))$. We can also define its inverse by 
	\begin{eqnarray*}
		\pi_{*}: \mathcal{M}_{\sigma}(\Sigma)& \to & \mathcal{M}_{F}(U_{\infty}) \\
		\eta & \to & \eta \circ \pi^{-1}
	\end{eqnarray*}
	where $\pi_{*}(\eta)(B) = \eta(\pi^{-1}(B))$. We define an $F$-\textit{cylinder} as a set $C_n$ of the form
	$$C_{n} = P_{i_0}\cap F^{-1}(P_{i_1})\cap F^{-2}(P_{i_2})\cap \dots \cap F^{-n}(P_{i_{n-1}}).
	$$ 
	Observe that $\pi^{-1}(C_n)$ is a cylinder on $\Sigma$ in the sense of \cite[Section 2]{Sa1}. Thus, given an induced potential $\overline{\phi}$ we define a potential on $\Sigma$ by 
	$$
	\Phi\big(\overline{a}\big)=\overline{\phi}\big(\pi(\overline{a})\big).
	$$		
We say that a probability measure $m_\Phi$  (resp. $m_{\overline\phi}$) on $\Sigma$ (resp. $U_\infty$) is	
$\Phi$-\textit{conformal} (resp. $\overline\phi$-\textit{conformal}) if (see \cite{Sa4}, Definition 2.6)
\[
\frac{d m_{\Phi}}{d m_{\Phi} \circ \sigma} = e^{\Phi}, \,\, \Bigg{(}resp. \,\, \frac{d m_{\overline{\phi}}}{d m_{\overline{\phi}} \circ F} = e^{\overline{\phi}} \Bigg{)}.
\]
As a consequence, we have $m_{\Phi}(\sigma(A)) = \int_{A} e^{-\Phi} dm_{\Phi}$ (resp. $m_{\overline\phi}(F(A)) = \int_{A} e^{-\overline\phi} dm_{\overline\phi}$), whenever $\sigma\vert_A$ (resp. $F\vert_A$) is injective. Note that 
 $$m_{\Phi} \text{ is 	
$\Phi$-conformal } \iff m_{\overline\phi} \text{ is $\overline\phi$-conformal. }$$
We say that an invariant probability measure $\mu_\Phi$  (resp. $\mu_{\overline\phi}$) on $\Sigma$ (resp. $U_\infty$) is a	
\textit{Gibbs measure} if for some global constants $K,P$ and every cylinder $\tilde{C}_{n}$ the following holds (see \cite{Sa2}, Introduction) 
\[
K^{-1} \leq \frac{\mu_{\Phi}(\tilde{C}_{n})}{e^{-nP + S_{n}\Phi(x)}} \leq K, \,\, \text{for all} \,\, x \in \tilde{C}_{n}.
\]	
Respectively, for every $F$-cylinder $C_{n}$ and some global constants $K,P$ the following holds
\[
K^{-1} \leq \frac{\mu_{\overline{\phi}}(C_{n})}{e^{-nP + S_{n}\overline{\phi}(x)}} \leq K, \,\, \text{for all} \,\, x \in C_{n}.
\]	 	
The topological conjugacy  $\pi$ allows us to translate some  known  results  for countable Markov  shifts to our context of inducing schemes. A list of those results will be given below. Note that, since we are dealing with full branch inducing schemes, the  shift  $\sigma$ is   topologically mixing. We define the \emph{$n$-th variation} of $\overline{\phi}$ as
	$$
	V_{n}(\overline{\phi}) : = \sup\left\{|\overline{\phi}(x) - \overline{\phi}(y)| \colon  x, y \in C_{n}\right\},
	$$ where the supremum is taken over all cylinders $C_n$. We say that a potential
	$\overline{\phi} : U_{\infty} \to \mathbb{R}$ has \emph{summable variation} if  $\sum_{n\ge1} V_{n}(\overline{\phi}) < \infty$.  The potential $\overline{\phi}$  is called \emph{locally H\"older} if there exist $A>0$ and $0<\theta<1$ such that $V_n(\overline{\phi}) \leq A \theta^n$, for all $n\geq 1$.  Observe that any locally H\"older potential has summable variation.  
	
\begin{proposition}\label{pr.todd}
		If $\phi: M \to \mathbb{R}$ is a H\"older potential, then  $\overline{\phi} : U_{\infty} \to \mathbb{R}$ is a locally H\"older potential.
	\end{proposition}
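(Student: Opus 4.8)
Here is how I would prove Proposition~\ref{pr.todd}.

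The plan is to estimate the variations
$$V_n(\bar\phi)=\sup\bigl\{|\bar\phi(x)-\bar\phi(y)|\colon x,y\in C_n\bigr\}$$
directly and show they decay geometrically in $n$. Fix $x,y$ in a common $n$‑cylinder $C_n$ with symbols $a_1,\dots,a_n$, and let $\hat x,\hat y\in\mathcal{U}$ be the corresponding points, so that $\hat x$ and $\hat y$ follow the same itinerary under $F$; in particular $\tau(\hat x)=\tau(\hat y)=\tau_{a_1}$, and, writing $\hat x_i=F^{i-1}(\hat x)$ and $\hat y_i=F^{i-1}(\hat y)$, one has $\hat x_i,\hat y_i\in P_{a_i}$ and $\hat x_{i+1}=f^{\tau_{a_i}}(\hat x_i)$, and likewise for $\hat y$.

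The first step, which is the heart of the matter, is to record the backward contraction coming from hyperbolic times. In Pinheiro's construction underlying Theorem~\ref{Pinheiro}, each $P_i$ is contained in a hyperbolic pre‑ball $V_{\tau_i}(\cdot)$, and $f^{\tau_i}(P_i)=\mathcal{U}$ is contained in the corresponding hyperbolic ball $B_\delta(\cdot)$; hence $\operatorname{diam}\mathcal{U}\le 2\delta$, and for all $z,w\in P_i$ and $0\le k\le\tau_i$,
$$d\bigl(f^k(z),f^k(w)\bigr)\le\sigma^{\,\tau_i-k}\,d\bigl(f^{\tau_i}(z),f^{\tau_i}(w)\bigr)=\sigma^{\,\tau_i-k}\,d\bigl(F(z),F(w)\bigr).$$
Applying this along the itinerary and telescoping, using $\tau_i\ge1$ and the free bound $d(\hat x_n,\hat y_n)\le 2\delta$, gives $d(\hat x_2,\hat y_2)\le 2\delta\,\sigma^{\,n-2}$ and therefore
$$d\bigl(f^j(\hat x),f^j(\hat y)\bigr)\le 2\delta\,\sigma^{\,\tau_{a_1}-j}\,\sigma^{\,n-2}\qquad(0\le j\le\tau_{a_1}-1),$$
with the convention that the exponent $n-2$ is replaced by $\max(n-2,0)$ for the first few values of $n$. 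Next I would feed this into the definition of the induced potential: since $\phi$ is $\alpha$‑Hölder, say $|\phi(u)-\phi(v)|\le C\,d(u,v)^\alpha$, we get
$$|\bar\phi(\hat x)-\bar\phi(\hat y)|\le\sum_{j=0}^{\tau_{a_1}-1}\bigl|\phi(f^j(\hat x))-\phi(f^j(\hat y))\bigr|\le C(2\delta)^\alpha\,\sigma^{\alpha(n-2)}\sum_{j=0}^{\tau_{a_1}-1}\sigma^{\alpha(\tau_{a_1}-j)},$$
and the last sum is bounded by the convergent geometric series $\sum_{\ell\ge1}\sigma^{\alpha\ell}=\sigma^\alpha/(1-\sigma^\alpha)$, uniformly in $a_1$. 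This yields $V_n(\bar\phi)\le A\,\theta^{\,n}$ with $\theta=\sigma^\alpha\in(0,1)$ and an explicit constant $A$, which is exactly local Hölderness; in particular $\bar\phi$ then also has summable variation.

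I do not expect a genuine obstacle. The only substantive input is the uniform backward contraction along hyperbolic‑time branches (together with the uniform bound on $\operatorname{diam}\mathcal{U}$), which converts the Hölder modulus of continuity of $\phi$ into a geometrically convergent series; everything else is bookkeeping. The one mild point of care is the precise indexing convention of the coding — whether an $n$‑cylinder constrains the iterates $F^0,\dots,F^{n-1}$ or $F^1,\dots,F^n$ — which affects the constant $A$ but not the ratio $\theta$, so it is harmless.
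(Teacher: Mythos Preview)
Your proof is correct and follows essentially the same approach as the paper: both arguments use the backward contraction of hyperbolic pre-balls to bound $d(f^j(\hat x),f^j(\hat y))$ by $\sigma^{\tau_{a_1}-j}$ times $d(F(\hat x),F(\hat y))$, telescope this across the $n$ branches of the cylinder (using $\tau_i\ge 1$ and $\operatorname{diam}\mathcal U\le 2\delta$) to extract a factor $\sigma^{\alpha n}$, and then sum the resulting geometric series to obtain $V_n(\bar\phi)\le A\theta^n$ with $\theta=\sigma^\alpha$. Your remark about the indexing convention affecting only the constant $A$ is accurate and matches the minor index shifts present in the paper's own computation.
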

	
	\begin{proof}
		Since $\phi$ is H\"older, there are constants $\rho, \alpha > 0$ such that $| \phi(x) - \phi(y) | \leq \rho d(x,y)^{\alpha}$. We must show that there are   
		$A > 0$ and $\theta \in (0,1)$ such that 
		\begin{equation}\label{goal}
			|\overline{\phi}(x) - \overline{\phi}(y)| \leq A \theta^{n}, \quad\forall x,y \in C_{n}.
		\end{equation}
		In fact, given $x,y \in C_{n}$, 
		there are $P_{i_{0}}, P_{i_{1}}, \dots, P_{i_{n-1}}$ such that $F^{k}(x), F^{k}(y) \in P_{i_{k}}$. Then,  
		\begin{eqnarray*}
			|\overline{\phi}(x) - \overline{\phi}(y) | &= &
			\left|
			\sum_{j=0}^{\tau_{i_{0}}-1}
			\phi(f^{j}(x)) - \phi(f^{j}(y))\right|
			\leq 
			\sum_{j=0}^{\tau_{i_{0}}-1}
			\left|\phi(f^{j}(x)) - \phi(f^{j}(y))\right|\\
			&\leq &
			\rho \sum_{j=0}^{\tau_{i_{0}} - 1} d(f^{j}(x),f^{j}(y))^{\alpha}
			\leq  \rho \sum_{j=0}^{\tau_{i_{0}} - 1} (\sigma^{\tau_{i_{0}} - j}d(F(x),F(y))^{\alpha}\\
			&\leq &
			\rho \sum_{j=1}^{\tau_{i_{0}}} \sigma^{\alpha j} d(F(x),F(y))^{\alpha}
			\leq 
			\rho \sum_{j=1}^{\tau_{i_{0}}} \sigma^{\alpha j}\sigma^{\tau_{i_{1}}\alpha} d(F^{2}(x),F^{2}(y))^{\alpha}\\ 
			&\leq &
			\rho \sum_{j=1}^{\tau_{i_{0}}} \sigma^{\alpha j}\sigma^{\tau_{i_{1}}\alpha} \dots \sigma^{\tau_{i_{n-1}}\alpha} d(F^{n}(x),F^{n}(y))^{\alpha} 
			\leq 
			\rho \sum_{j=1}^{\tau_{i_{0}}} \sigma^{\alpha j} \sigma^{(n-1) \alpha} (2 \delta)^{\alpha}\\
			&\leq &
			\rho (2 \delta)^{\alpha} \sum_{j=0}^{\infty} \sigma^{\alpha j} \sigma^{n \alpha}.\\  
		\end{eqnarray*}
		This yields~\eqref{goal}  with $A  = \rho (2\delta)^{\alpha} (\sum_{j=0}^{\infty} (\sigma^{\alpha})^{j})$ and $\theta  = \sigma^{\alpha}$.
	\end{proof}

The   \emph{Gurevich pressure}  is defined as
	$$
	\displaystyle P_{G}(\overline{\phi},a)= \lim_{n \to \infty} \frac{1}{n} \log \left(\sum_{F^{n}(x) = x, x_{0} = a} e^{\overline{\phi}_{n}(x)} \right),
	$$
	where $\overline{\phi}_{n}(x) = \sum_{j=0}^{n - 1} \overline{\phi}(f^{j}(x)).$ It follows from   \cite[Theorem 1]{Sa1}  that $P_{G}(\overline{\phi},a)$ does not depend on $a$ and we denote it by $P_{G}(\overline{\phi})$.

	\begin{theorem}[Sarig \text{\cite[Theorem 2]{Sa1}}]\label{Sarig} If $(\Sigma, \sigma)$ a countable full shift and $ {\Phi}$ is locally H\"older, then the Gurevich Pressure is well defined and
		independent of $a$. 
		Moreover,
		$$
		P_{G}(\Phi) = \sup \left\{ P_{\topo}( {\Phi}_{|Y}) \mid Y \subset \Sigma \,\, \text{is a topologically mixing finite Markov shift}\right\},
		$$
		where $P_{\topo}({\Phi}_{|Y})$ is the topological pressure of the restriction of $ \Phi$ to   $Y$.
	\end{theorem}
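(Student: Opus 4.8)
The final statement is Sarig's theorem on the Gurevich pressure, so the plan is to indicate how one proves that this pressure is well defined, independent of the reference symbol $a$, and realized as the supremum of topological pressures over finite topologically mixing subsystems. The natural route is the one pioneered by Gurevich and extended by Sarig via the thermodynamic formalism for countable Markov shifts.

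\medskip

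\textbf{Plan of proof.} First I would reduce to a single state: fix a symbol $a \in S$ and set
$$
Z_n(a) = \sum_{\sigma^n(x)=x,\ x_0=a} e^{\bar\phi_n(x)}.
$$
Because the shift is topologically mixing on a countable alphabet, the collection of periodic orbits based at $a$ is submultiplicative up to a bounded distortion factor: concatenating an $n$-loop and an $m$-loop at $a$ yields an $(n+m)$-loop, and locally H\"older regularity of $\bar\phi$ (with summable variation, as supplied by Proposition~\ref{pr.todd}) gives a uniform constant $C$ with $Z_{n+m}(a) \le C\, Z_n(a)\, Z_m(a)$. Hence $\log(C Z_n(a))$ is subadditive and Fekete's lemma yields existence of the limit defining $P_G(\bar\phi)$. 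For independence of $a$: given symbols $a,b$, topological mixing provides words $w$ from $a$ to $b$ and $w'$ from $b$ to $a$ of bounded length, and splicing these into loops shows $Z_n(a)$ and $Z_{n+k}(b)$ differ by a multiplicative constant (again using bounded variation to control the Birkhoff sums along the connecting words), so the exponential growth rates coincide.

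\medskip

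Next I would prove the variational characterization in terms of finite subsystems. The inequality $P_G(\bar\phi) \ge \sup_Y P_{\topo}(\bar\phi|_Y)$ over finite topologically mixing Markov shifts $Y \subset \Sigma$ is the easy direction: any such $Y$ containing the symbol $a$ has all its $a$-based periodic orbits among those counted in $Z_n(a)$, so the classical Gurevich formula for the finite shift (where $P_G = P_{\topo}$ by the standard finite-alphabet theory) gives $P_{\topo}(\bar\phi|_Y) \le P_G(\bar\phi)$; a subsystem not containing $a$ is handled by first passing to a larger finite mixing subsystem that does, using mixing to connect. For the reverse inequality one approximates from inside: enumerate $S = \{s_1, s_2, \dots\}$, let $\Sigma_N$ be the (topologically mixing, after possibly taking a suitable irreducible block) finite subshift on $\{s_1,\dots,s_N\}$, and show $P_{\topo}(\bar\phi|_{\Sigma_N}) \to P_G(\bar\phi)$ as $N \to \infty$. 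This follows because for any fixed $m$, all $a$-loops of length $m$ use only finitely many symbols, hence lie in some $\Sigma_N$, so $\frac1m \log Z_m^{(N)}(a) \to \frac1m \log Z_m(a)$ as $N\to\infty$, and then letting $m\to\infty$ recovers $P_G$.

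\medskip

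\textbf{Main obstacle.} The delicate point throughout is the uniform distortion control: making sure that the Birkhoff sums $\bar\phi_n$ along periodic orbits sharing a long common prefix differ by a uniformly bounded amount, independent of $n$ — this is exactly where the locally H\"older (equivalently, here, summable variation) hypothesis enters, and it is what makes the concatenation estimate $Z_{n+m}(a)\le C Z_n(a) Z_m(a)$ and the $a$-to-$b$ comparison work with a \emph{single} constant. Establishing this bound carefully is the technical heart; once it is in place, Fekete's lemma, the finite-alphabet thermodynamic formalism, and the inside-approximation argument assemble the statement. Since this is a direct citation of \cite{Sa1}, in the paper itself one simply invokes Sarig's theorem; the sketch above is how the cited result is obtained.
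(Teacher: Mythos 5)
The paper does not prove this statement — it is invoked as a citation to Sarig \cite{Sa1} — so there is no ``paper's proof'' to compare against; what you have offered is a sketch of how Sarig's result is obtained, which is a reasonable thing to supply. Your outline does capture the correct overall strategy (concatenation of loops plus distortion control, Fekete's lemma, independence of the base symbol by splicing, and inside approximation by finite subshifts), but one step is stated backwards and would mislead a reader trying to reconstruct the argument.

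The concatenation inequality goes the other way. Gluing an $n$-loop at $a$ to an $m$-loop at $a$ produces an injection from pairs of loops into the set of $(n+m)$-loops at $a$ — but it is not a surjection, since an $(n+m)$-loop need not pass through $a$ at time $n$. Combined with the locally H\"older (summable variation) distortion bound, this yields the \emph{super}multiplicative estimate $Z_n(a)\,Z_m(a) \le C\,Z_{n+m}(a)$, not $Z_{n+m}(a) \le C\,Z_n(a)\,Z_m(a)$ as you wrote. The latter submultiplicativity can fail for countable-alphabet shifts (it is special to the finite-alphabet case, where one controls the returns to $a$ by a finite factor). Consequently $\log Z_n(a)$ is, up to an additive constant, \emph{super}additive, and Fekete's lemma gives that $\lim_n \frac1n\log Z_n(a)$ exists and equals $\sup_n \frac1n\log\bigl(C^{-1}Z_n(a)\bigr)$, possibly $+\infty$; the role of topological mixing is precisely to guarantee $Z_n(a)>0$ for all large $n$ so that this Fekete argument is not polluted by $-\infty$ terms. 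Once this sign is fixed, the rest of your sketch lines up with the structure of Sarig's argument, including the variational formula: the $\ge$ direction uses that $a$-loops in a finite mixing $Y\ni a$ are among those counted in $Z_n(a)$, and the $\le$ direction follows from the super-additive characterization $P_G(\bar\phi)=\sup_m\frac1m\log\bigl(C^{-1}Z_m(a)\bigr)$ together with monotone convergence $Z_m^{(N)}(a)\uparrow Z_m(a)$ along an exhaustion by finite sub-alphabets — rather than the (incorrect) claim that ``all $a$-loops of length $m$ lie in some single $\Sigma_N$,'' which fails because a fixed length $m$ still allows infinitely many distinct symbols to appear across the family of loops. One also has to arrange the chosen finite subsystems to be topologically mixing (for instance by including loops of coprime lengths at $a$), which you flag but do not carry out.
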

	

	\begin{theorem}[Iommi-Jordan-Todd \text{\cite[Theorem 2.1]{IJT}}] \label{VarPrinc} 
	Let $(\Sigma,\sigma)$ be a countable full shift and 
		$\Phi : \Sigma \to \mathbb{R}$ a potential with summable variation and $\sup \Phi < \infty$. Then
		$$
		P_{G}(\Phi) = \sup \bigg{\{}h_{\nu}(F) + \int \Phi d \nu \mid -\int\Phi d\nu < \infty\bigg{\}}.
		$$
	\end{theorem}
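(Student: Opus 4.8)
The strategy is to reduce the statement to the classical case of potentials bounded above, by truncation. For $M>0$ put $\Phi_M:=\min\{\Phi,M\}$. Since the truncation $t\mapsto\min\{t,M\}$ is $1$-Lipschitz, $V_n(\Phi_M)\le V_n(\Phi)$ for every $n$, so $\Phi_M$ still has summable variation; and of course $\Phi_M\le M$. Hence the variational principle of Sarig \cite{Sa1} for summable-variation potentials that are bounded above applies to each $\Phi_M$ on the same topologically mixing shift, giving
\[
P_G(\Phi_M)=\sup\Bigl\{\,h_\nu(F)+\int\Phi_M\,d\nu \;:\; -\!\!\int\Phi_M\,d\nu<\infty\,\Bigr\}.
\]
The plan is then to let $M\to\infty$ in this identity, and the three things to check are: that the left-hand sides converge to $P_G(\Phi)$; that the admissibility condition on $\nu$ is unaffected by truncation; and that the integrals on the right pass to the limit.

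For the first point, monotonicity of the Gurevich pressure in the potential (immediate from the definition by weighted periodic orbits, since $\Phi_M\le\Phi$ gives $(\Phi_M)_n\le\Phi_n$ at every periodic point) shows $M\mapsto P_G(\Phi_M)$ is nondecreasing and bounded above by $P_G(\Phi)$. For the reverse bound I would invoke Theorem~\ref{Sarig}: on any topologically mixing \emph{finite} Markov subshift $Y\subset\Sigma$ the continuous function $\Phi|_Y$ is bounded, say $\Phi\le M_Y$ on $Y$, so $\Phi_M|_Y=\Phi|_Y$ once $M\ge M_Y$, whence $P_{\topo}(\Phi_M|_Y)=P_{\topo}(\Phi|_Y)$ and therefore $P_G(\Phi_M)\ge P_{\topo}(\Phi|_Y)$ for all large $M$; taking the supremum over such $Y$ and using the finite-subshift characterization of $P_G$ in Theorem~\ref{Sarig} yields $\lim_M P_G(\Phi_M)\ge P_G(\Phi)$. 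Thus $P_G(\Phi_M)\uparrow P_G(\Phi)$, with the understanding that the common value may be $+\infty$, in which case the asserted identity holds trivially with both sides infinite.

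For the second point, splitting $\Phi=\Phi^+-\Phi^-$ one checks that for $M\ge 0$ one has $\Phi_M^-=\Phi^-$ (the two functions already agree wherever $\Phi<0$, and both vanish where $\Phi\ge 0$), so $-\int\Phi_M\,d\nu<\infty$ if and only if $-\int\Phi\,d\nu<\infty$; the admissible measures are therefore the same for $\Phi$ and for every $\Phi_M$. Fixing an admissible $\nu$, the identity above gives $h_\nu(F)+\int\Phi_M\,d\nu\le P_G(\Phi_M)\le P_G(\Phi)$ for all $M$; since $\Phi_M+\Phi^-\ge 0$ increases pointwise to $\Phi^+$, monotone convergence yields $\int\Phi_M\,d\nu\uparrow\int\Phi\,d\nu$ in $(-\infty,+\infty]$, and letting $M\to\infty$ gives $h_\nu(F)+\int\Phi\,d\nu\le P_G(\Phi)$. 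Taking the supremum over admissible $\nu$ proves one inequality. The opposite one is immediate: $\Phi_M\le\Phi$ gives $P_G(\Phi_M)=\sup_\nu\{h_\nu(F)+\int\Phi_M\,d\nu\}\le\sup_\nu\{h_\nu(F)+\int\Phi\,d\nu\}$ over admissible $\nu$, and $M\to\infty$ together with $P_G(\Phi_M)\uparrow P_G(\Phi)$ finishes the proof.

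The one genuinely delicate step is the convergence $P_G(\Phi_M)\uparrow P_G(\Phi)$: this is exactly where topological mixing and Theorem~\ref{Sarig} enter, and where one must be careful to allow $+\infty$ as a value (if $P_G(\Phi)=+\infty$, the same estimates show the supremum on the right is infinite as well). Everything else reduces to the classical variational principle on the compact finite Markov subshifts, fed through Sarig's approximation theorem, together with routine bookkeeping of positive and negative parts, so I do not expect obstacles there.
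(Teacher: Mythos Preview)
The paper does not give its own proof of this statement: it is quoted verbatim as Theorem~2.10 of Iommi--Jordan--Todd \cite{IJT} and used as a black box. So there is nothing in the paper to compare your argument against.

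That said, your truncation approach is correct and is in fact the standard route (and essentially the one taken in \cite{IJT}): reduce to Sarig's variational principle for potentials bounded above by passing to $\Phi_M=\min\{\Phi,M\}$, use monotone convergence on the measure side, and use the finite-subshift approximation of the Gurevich pressure to get $P_G(\Phi_M)\uparrow P_G(\Phi)$. Two small remarks. First, you invoke Theorem~\ref{Sarig}, which in this paper is stated only for \emph{locally H\"older} potentials, whereas you need it for summable variation; Sarig's approximation theorem in \cite{Sa1} does hold at that generality, so this is a citation issue rather than a mathematical one, but you should point to \cite{Sa1} directly. Second, to conclude that $\Phi|_Y$ is bounded on a finite subshift $Y$ you are implicitly using that summable variation forces $V_1(\Phi)<\infty$, so $\Phi$ is bounded on each $1$-cylinder and $Y$ meets only finitely many of them; it is worth saying this explicitly, since summable variation alone does not give global boundedness of $\Phi$ on $\Sigma$.
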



Let $T=(t_{ij})$ be the matrix transition of the shift $(\Sigma,\sigma)$, where $\Sigma = S^{\mathbb{N}}$. We say that it  has the \emph{big images and preimages (BIP) property} if 
$$
\exists \,\, b_{1},\dots,b_{N} \in S \,\,
\forall a \in S \,\, \exists \,\, i,j \in \{1,\dots,N\} \,\, \text{:} \,\, t_{b_{i}a}t_{ab_{j}} = 1.
$$
Clearly, if $(\Sigma,\sigma)$ is a full shift, then it has the BIP property. 
Given $a \in S$, let $[a] = \{(x_{0},x_{1},\dots) \mid x_{0} = a\}$ and $\Phi_{a}(x) = 1_{[a]} \inf \{n \geq 1 \mid \sigma^{n}(x) \in [a]\}$. Set
\[
Z_{n}(\Phi,a) := \sum_{\sigma^{n}(x) = x} e^{\Phi_{n}(x)}1_{[a]}(x), \quad Z_{n}^{*}(\Phi,a) := \sum_{\sigma^{n}(x) = x} e^{\Phi_{n}(x)}1_{[\Phi_{a}=n]}(x) 
\]
and  $$\lambda : = e^{P_{G}(\Phi)}.$$ We say that a potential $\Phi$ is \textit{recurrent} if for some $a \in S$ we have that $\sum_{n \geq 1} \lambda^{-n}Z_{n}(\Phi,a)$ diverges and it is \textit{positive recurrent} if it is recurrent and $\sum_{n \geq 1} n\lambda^{-n}Z_{n}^{*}(\Phi,a) < \infty$. 

\begin{theorem}\label{SarigBIP}\cite[Theorem 1]{Sa2} If $(\Sigma; \sigma)$ is topologically mixing and $\sum_{n\geq 1} V_n(\Phi) <\infty$, then $\Phi$ has
an invariant Gibbs measure if, and only if, it has the BIP property and $\Phi$ has finite Gurevich pressure.
\end{theorem}

\begin{corollary}\label{SarigRecurrent}\cite[Corollaries 1 and  2]{Sa2} If $(\Sigma,\sigma)$ has the BIP property, then
\[
\displaystyle P_{G}(\Phi)= \lim_{n \to \infty} \frac{1}{n} \log \left(\sum_{\sigma^{n}(x) = x, x_{0} = a} e^{\Phi_{n}(x)} \right)
\]
and $\Phi$ is positive recurrent.	
\end{corollary}

In the next theorem we give  a list of results due to Sarig  on the existence and uniqueness of conformal measures, Gibbs measures and equilibrium states for countable Markov shifts; see  \cite{Sa1,Sa2,Sa3, Sa4}. 


\begin{theorem}\label{Sarigseveral}	Let $(\Sigma,\sigma)$ be a full shift and  $\Phi: \Sigma \to \mathbb{R}$ a positive recurrent potential with  $\sum _{n\geq 1} V_n(\Phi) <\infty$  and $P_G(\Phi)=0$. If $L$ is the operator given by
	$$
	L (\Psi)(x) = \sum_{\sigma(y)=x} e^{\Phi(y)}\Psi(y),
	$$ 
	then
	\begin{enumerate}
		
		\item \cite[Theorem 2 \& Corollary 2]{Sa2} there is a finite measure $m_\Phi$ such that ${L^{*}(m_{\Phi}) =  m_{\Phi}}$; this means that $m_\Phi$ is $\Phi$-conformal; 
		
		\item \cite[Theorem 2 \& Corollary 2]{Sa2} there is a positive continuous function $h_\Phi$ such that $L(h_{\Phi}) = h_{\Phi}$;
		
		\item \cite[Theorem 2]{Sa2} if $\mu_{\Phi}$ is defined by  $d \mu_{\Phi} = h_{\Phi} d m_{\Phi}$, then $\mu_\Phi$ is an invariant probability measure and it is a Gibbs measure;

		\item \cite[Corollary 2]{Sa2} if either $h_{\mu_{\Phi}} (\sigma) < \infty$ or $-\int \Phi d \mu_{\Phi} < \infty$,
		then $\mu_{\Phi}$ is the unique equilibrium state; in particular, $P_{G}(\Phi) = h_{\mu_{\Phi}}(\sigma) + \int \Phi d\mu_{\Phi}$=0;

		\item \cite[Remark 3]{Sa1} $h_{\Phi}$ is unique and $m_{\Phi}$ is the unique $\Phi$-conformal probability measure ($h_{\Phi}$ is uniquely determined
		up to a multiplicative constant).
	\end{enumerate}

\end{theorem}

\subsection{Zero pressure and equilibrium states}\label{zero}	
The main goal of this subsection is to obtain Proposition~\ref{Zero}, where we  show  that hyperbolic potential with zero pressure lift to potentials with zero Gurevich pressure.

	Given  a hyperbolic potential $\phi : M \to \mathbb{R}$, note that if we set $\varphi : = \phi - P_{f}(\phi)$, then $\varphi$ is   a hyperbolic potential with $P_{f}(\varphi) = 0$. It is easy to see that the equilibrium states for the potentials $\phi$ and $\varphi$ are exactly the same (if any exists).
	
	\begin{proposition}\label{FiniteSup}
		If  $\varphi$ is a H\"older potential  such that $P_{f}(\varphi)=0$, then   $\sup \overline{\varphi}< \infty$.
	\end{proposition}
	
	\begin{proof}
		Since $P_{f}(\varphi)=0$, the Variational Principle for compact sets (see \cite{OV},\cite{W}) implies that
		\[
		0 = \sup_{\eta \in \mathcal{M}_{f}(M)} \bigg{\{} h_{\eta}(f) + \int \varphi d\eta \bigg{\}}.
		\]
		Once $h_{\eta}(f) \geq 0$ for every probability $\eta \in \mathcal{M}_{f}(M)$, it means that $\int \varphi d\eta \leq 0$ for every probability $\eta \in \mathcal{M}_{f}(M)$. The Abramov's Formulas imply that $\int \overline{\varphi} d\overline{\eta} \leq 0$ for every probability $\eta \in \mathcal{M}_{f}(M)$. 
		
		We claim that in the inducing scheme $(F,\mathcal{P})$ that we are considering, the partition $\mathcal{P} = \{P_{1},\dots,P_{n},\dots\}$ is such that each element $P_{i}$ has a point $x_{i}$ such that $\overline{\varphi}(x_{i}) \leq 0$. In fact, given $P_{i} \in \mathcal{P}$, there exists $x_{i} \in P_{i}$ such that $F(x_{i}) = x_{i}$. It means that $x_{i} \in M$ is a periodic point for the map $f$ with period $\tau(x_{i})$. For the probability
		\[
		\eta_{i} := \frac{1}{\tau(x_{i})}\sum_{j=0}^{\tau(x_{i}) - 1}\delta_{f^{j}(x_{i})}
		\]
		we have that $\overline{\eta}_{i} = \frac{1}{\tau(x_{i})}\delta_{x_{i}}$ and $\int \overline{\varphi} d\overline{\eta}_{i} \leq 0$ implies that $\overline{\varphi}(x_{i}) \leq 0$.
		Proposition \ref{pr.todd} guarantees that $\overline{\varphi}$ is locally H\"older, which implies that
		\[
		V_{1}(\overline{\varphi}) : = \sup\left\{|\overline{\varphi}(x) - \overline{\varphi}(y)| \colon  x, y \in P_{i}\right\} \leq A\theta.
		\]
		But we obtain $\overline{\varphi}(x)\leq \overline{\varphi}(x) -\overline{\varphi}(x_{i}) \leq A\theta$, for every $x \in P_{i}$, for every $i \in \mathbb{N}$. It means that $\sup \overline{\varphi} \leq A\theta$.
	\end{proof}

	\begin{proposition}\label{Zero}
		If  $\varphi$ is a H\"older $(\sigma,\epsilon)$-hyperbolic potential  such that $P_{f}(\varphi)=0$, then   ${P_{G}(\overline{\varphi}) = 0}$.
	\end{proposition}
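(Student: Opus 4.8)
The plan is to establish the two inequalities $P_G(\bar\varphi)\le 0$ and $P_G(\bar\varphi)\ge 0$ separately, using on one side the relationship between the Gurevich pressure and invariant measures on the shift together with Abramov's formula, and on the other side the sequence of near-maximizing measures supplied by Theorem~\ref{Pinheiro}.

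For the upper bound $P_G(\bar\varphi)\le 0$, I would use the Iommi--Jordan--Todd variational principle: $P_G(\bar\varphi)=\sup\{h_\nu(F)+\int\bar\varphi\,d\nu\colon -\int\bar\varphi\,d\nu<\infty\}$. For any $F$-invariant probability $\nu$ on $\Sigma$ with $\int\tau\,d\nu<\infty$ (which, modulo checking integrability, is the relevant class since $\tau$ has uniformly bounded integral on the lifts), project $\nu$ to an $f$-invariant probability $\mu$ on $M$ via the inducing scheme. Then Proposition~\ref{pr.zwei} gives $h_\nu(F)+\int\bar\varphi\,d\nu=\bigl(\int\tau\,d\nu\bigr)\bigl(h_\mu(f)+\int\varphi\,d\mu\bigr)$, and since $P_f(\varphi)=0$ the variational principle for the topological pressure of $f$ forces $h_\mu(f)+\int\varphi\,d\mu\le 0$. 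As $\int\tau\,d\nu>0$, the whole expression is $\le 0$; taking the supremum gives $P_G(\bar\varphi)\le 0$. One must be slightly careful that the supremum defining $P_G$ is not affected by measures of infinite $\tau$-integral or measures that fail to project; this is where invoking that $\bar\varphi$ is locally H\"older (Proposition~\ref{pr.todd}) and has summable variation, together with the hyperbolicity of $\varphi$, should close the gap.

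For the lower bound $P_G(\bar\varphi)\ge 0$, recall from the discussion after Proposition~\ref{pr.zwei} that Theorem~\ref{Pinheiro} furnishes an inducing scheme $(F,\mathcal{P})$ and liftable ergodic probabilities $\mu_n$ with $h_{\mu_n}(f)+\int\varphi\,d\mu_n\to P_f(\varphi)=0$, with the lifts $\bar\mu_n$ having uniformly bounded $\tau$-integral, say $\int\tau\,d\bar\mu_n\le C$. Abramov's formula gives $h_{\bar\mu_n}(F)+\int\bar\varphi\,d\bar\mu_n=\bigl(\int\tau\,d\bar\mu_n\bigr)\bigl(h_{\mu_n}(f)+\int\varphi\,d\mu_n\bigr)$. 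Since the bracket on the right tends to $0$ and the factor $\int\tau\,d\bar\mu_n$ is bounded, the left side tends to $0$; but each $\bar\mu_n$ is an admissible test measure in the Iommi--Jordan--Todd variational principle (its negative $\bar\varphi$-integral is finite by the uniform bound on $\int\tau\,d\bar\mu_n$ and boundedness of $\varphi$), so $P_G(\bar\varphi)\ge h_{\bar\mu_n}(F)+\int\bar\varphi\,d\bar\mu_n$ for every $n$, hence $P_G(\bar\varphi)\ge 0$.

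The main obstacle I anticipate is not the two-sided estimate itself but the bookkeeping around \emph{which} inducing scheme to fix and the integrability of $\tau$ and $\bar\varphi$ against the relevant measures. Theorem~\ref{Pinheiro} produces finitely many schemes, and the near-optimal measures $\mu_n$ may a priori lift to different ones; one should either pass to a subsequence that all lift to a single scheme (possible since there are finitely many), or argue scheme-by-scheme and take a maximum at the end. Likewise, topological mixing of the associated shift—needed to invoke Theorem~\ref{Sarig} and the Iommi--Jordan--Todd theorem—must be arranged, perhaps by restricting to a suitable transitive/mixing component; controlling $\int\bar\varphi\,d\nu$ when $\int\tau\,d\nu=\infty$ is the delicate point in making the upper bound airtight. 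Once these integrability and mixing technicalities are handled, the Abramov--variational-principle sandwich yields $P_G(\bar\varphi)=0$.
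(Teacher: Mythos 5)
Your lower-bound argument reproduces the paper's: lift near-maximizing measures via Theorem~\ref{Pinheiro}, apply Abramov's formula (Proposition~\ref{pr.zwei}), and use the uniform bound on $\int\tau\,d\bar\mu_n$ together with $-\bar\varphi\le\tau\,\|\varphi\|_\infty$ to see the lifts are admissible and give $P_G(\bar\varphi)\ge 0$.

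For the upper bound your route is genuinely different and it has a real gap, which you yourself flag without resolving. The Iommi--Jordan--Todd formula runs over \emph{all} $F$-invariant $\nu$ with $-\int\bar\varphi\,d\nu<\infty$, and that condition does not force $\int\tau\,d\nu<\infty$: since $\varphi$ can be positive on a large part of $M$, a measure $\nu$ concentrated on high cylinders where $S_\tau\varphi$ is moderate could have $\int\tau\,d\nu=\infty$ yet $-\int\bar\varphi\,d\nu<\infty$. For such $\nu$, Zweimüller's Abramov formula does not apply (the Kac-type projection is not a finite measure), so you cannot conclude $h_\nu(F)+\int\bar\varphi\,d\nu\le 0$, and a priori the supremum could be attained precisely on this class. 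Restricting the supremum to $\int\tau\,d\nu<\infty$ only bounds a sub-supremum, not $P_G(\bar\varphi)$. The paper avoids this entirely by using Sarig's approximation (Theorem~\ref{Sarig}): $P_G(\bar\varphi)=\sup_N P_{\topo}(\bar\varphi|_{\Sigma_N})$ over finite Markov subshifts $\Sigma_N$, and on each $\Sigma_N$ the inducing time $\tau$ is bounded, so the classical equilibrium state $\bar\nu_N$ automatically has $\int\tau\,d\bar\nu_N<\infty$; projecting it and comparing with $P_f(\varphi)=0$ produces a contradiction if $P_G(\bar\varphi)>0$. This finite-subshift truncation is precisely the device your proof is missing; without it, or some substitute bound on $\int\bar\varphi\,d\nu$ for $\nu$ with $\int\tau\,d\nu=\infty$, the upper bound is not established.
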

	
	\begin{proof}
		Firstly, we prove that $P_{G}(\overline{\varphi}) \geq 0$. By Proposition \ref{FiniteSup} we have  $\sup \overline{\varphi} < \infty$ and by Theorem \ref{VarPrinc} we have the Variational Principle for the Gurevich pressure. By  Corollary \ref{fixed}, there exists an inducing scheme $(F,\mathcal{P})$ for which there exists a sequence $(\mu_{n})_n$ of expanding ergodic  measures on $M$, liftable to $(F,\mathcal{P})$ such that $h_{\mu_{n}}(f) + \int \varphi d\mu_{n} \to P_{f}(\varphi)=0$. It follows from  Abramov's Formulas (Proposition \ref{pr.zwei})  that
		$$
		h_{\overline{\mu}_{n}}(F) + \int \overline{\varphi} d\overline{\mu}_{n} = \bigg{(}\int \tau d \overline{\mu}_{n}\bigg{)} \bigg{(}h_{\mu_{n}}(f) + \int \varphi d\mu_{n}\bigg{)}.
		$$ 
		Since $h_{\mu_{n}}(f) + \int \varphi d\mu_{n} \to P_{f}(\varphi)=0$ and Theorem \ref{Pinheiro} guarantees that the sequence $\big{\{}\int \tau d \overline{\mu}_{n}\big{\}_{n}}$ is  bounded, we obtain 
		$h_{\overline{\mu}_{n}}(F) + \int \overline{\varphi} d\overline{\mu_{n}} \to 0$. By Proposition \ref{FiniteSup} we have that $\sup \overline{\varphi} < \infty$ and by Theorem \ref{VarPrinc} we have the Variational Principle for the Gurevich Pressure. This gives $  P_G(\overline{\varphi})\ge 0$.
		
		Now, we prove that $P_{G}(\overline{\varphi}) \leq 0$. By taking the finite Markov subshift with symbols $P_{1},P_{2},...,P_{N}$, denoted by $(\Sigma_{N},\sigma_{N})$, we obtain 
		an equilibrium state $\mu_{N}$ for the finite subshift $(\Sigma_{N},\sigma_{N})$ and Theorem~\ref{Sarig}  		 guarantees that the  $F$-invariant measure $\overline{\nu}_{N}=\pi^*(\mu_N)$ is such that $h_{\overline{\nu_{N}}}(F) + \int \overline{\varphi} d\overline{\nu_{N}} = P_{\topo}(\overline{\varphi}_{|\pi(\Sigma_{N})}) 
		\to P_{G}(\overline{\varphi})$. If we had $P_{G}(\overline{\varphi}) > 0$, considering $\nu_N$ the projected $f$-invariant measure as defined by~\eqref{eq.projecao}, for some $N$ big enough we obtain 
		$$
		0 < h_{\overline{\nu}_{N}}(F) + \int \overline{\varphi} d\overline{\nu}_{N} = \bigg{(}\int \tau d \overline{\nu}_{N}\bigg{)} \bigg{(}h_{\nu_{N}}(f) + 
		\int \varphi d\nu_{N}\bigg{)}.
		$$ 
		This implies that
		$$
		0 < h_{\nu_{N}}(f) + \int \varphi d\nu_{N} \leq P_{f}(\varphi) = 0,
		$$
		which gives a contradiction. So, $P_{G}(\overline{\varphi}) \leq 0$.
		Finally, by the Variational Principle (Theorem~\ref{VarPrinc}) for inducing schemes, 
		we obtain $0 \leq P_{\topo}(\overline{\varphi}) = P_{G}(\overline{\varphi}) \leq 0$ and $P_{G}(\overline{\varphi})=0$.
	\end{proof}

Using Proposition \ref{Zero} we are able to use Theorem \ref{Sarigseveral} to obtain a unique $\overline{\varphi}$-conformal probability measure $m_{\overline{\varphi}}$ and a unique invariant Gibbs measure $\mu_{\overline{\varphi}}$ for $F$. 


	\section{Finiteness of ergodic equilibrium states}\label{finiteness}
	
	In this section, we will consider the system $(f,\varphi)$ where $P_{f}(\varphi)= 0$, the lifted system $(F,\overline{\varphi})$ (Proposition \ref{Zero} gives $P_{G}(\overline{\varphi}) = 0$), the $\overline{\varphi}$-conformal measure $m_{\overline{\varphi}}$ and the Gibbs measure $\mu_{\overline{\varphi}}$ (both given by Theorem \ref{Sarigseveral}). Following ideas of Iommi and Todd in \cite{IT2}, we will prove that the inducing time is integrable with respect to the Gibbs measure $\mu_{\overline{\varphi}}$. As a consequence, we have that $-\int \overline{\varphi} d\mu_{\overline{\varphi}} < \infty$ and the Gibbs measure is the unique equilibrium state for the induced potential. Also, we can project it to a measure $\mu_{\varphi}$, which will be   an equilibrium state for the original system $(f,\varphi)$. It can be done for each inducing scheme of the Markov structure for which sequences of measures are liftable, as in the previous section. So, at the end we obtain finitely many equilibrium states because the Markov structure is finite.
	
	We will state results  adapted from \cite{IT2}   whose proofs follow exactly along the same lines.
	The strategy is, firstly, to show that for an inducing scheme given by Corollary \ref{fixed}, a measure with low free energy is far from being a Gibbs measure  (Proposition \ref{Gibbs}). As a consequence, the accumulation point, in the weak* topology, of a sequence of measures with free energy converging to zero (pressure) will provide us a Gibbs measure, which is $\mu_{\overline{\varphi}}$, by uniqueness. This measure is an equilibrium state and, by the Abramov's formulas, this can be projected to an equilibrium state $\mu_{\varphi}$  (Proposition \ref{Finitely}).
	
	\subsection{Measures with low free energy}
	The main goal of this subsection is to prove Proposition~\ref{Gibbs} below, where we obtain a key property for the conformal measure of an inducing scheme.

 Given an inducing scheme $(F,\mathcal{P})$ and $n\ge 1$, 	
let   $\mathcal{P}_{n}$ denote the partition made of cylinders~$C_{n}^{i}$ relative to the inducing scheme $(F,\mathcal{P})$,   i.e.
    \[
    \mathcal{P}_{n}^F = \bigvee_{j=0}^{n-1} F^{-j}(\mathcal{P}).
    \]

	\begin{proposition}
		\label{Gibbs}
		Given a hyperbolic potential $\varphi$ with $P_{f}(\varphi)=0$ and an inducing scheme $(\tilde{F},\tilde{\mathcal{P})}$, there exists $k \in \mathbb{N}$ such for   $(F,\mathcal P)$, with $F=\tilde{F}^{k}$ and $\mathcal P=\mathcal{P}^{\tilde F}_k$, the following holds: denoting $m_{\overline{\varphi}}$ the conformal measure for  $(F,\overline{\varphi})$, there exist  $\gamma_{0} \in (0,1)$ and,  for any $n\in\mathbb N$ and any cylinder $C_{n}^i  \in \mathcal{P}_{n}^{F}$,   a constant  $\delta_{n,i}<0$ such that, for any  measure $\mu_{F} \in \mathcal{M}_{F}$ with  
		$$
		\mu_{F}(C_{n}^i ) \leq (1 - \gamma_{0}) m_{\overline{\varphi}}(C_{n}^i ) \quad\text{or}\quad m_{\overline{\varphi}}(C_{n}^i ) \leq (1 - \gamma_{0}) \mu_{F}(C_{n}^i ), 
		$$
		we have   $h_{\mu_{F}}(F) + \int \overline{\varphi} d \mu_{F} < \delta_{n,i}$.
	\end{proposition}

	In order to prove this result, we will first show that $m_{\overline{\varphi}}(C_{n} )$ decreases exponentially fast with $n$. It will allow us to choose $k$ in such way that  $m_{\overline{\varphi}}(C_{n} )$ is small enough. After that, we compute the Gurevich pressure for a modified potential and use it to estimate the free energy of measures that give the cylinders small mass, when compared to $m_{\overline{\varphi}}(C_{n} )$. Finally, we estimate the free energy of measures that gives the cylinders big mass, when compared to $m_{\overline{\varphi}}(C_{n} )$.  


\begin{lemma}
		\label{Intermediate}
		Suppose that we have an inducing scheme $(F,\mathcal{P})$ and a locally H\"older potential such that  $P_{G}(\overline{\varphi}) = 0$. If $m_{\overline{\varphi}}$ is the conformal measure 
		for the system $(F,\overline{\varphi})$, then for any $C_{n}  \in \mathcal{P}_{n}^{F}$ and any $n \in \mathbb{N}$,
		$$
		m_{\overline{\varphi}}(C_{n} ) \leq e^{-\lambda n},
		$$
		where $\lambda = -\log(K \sup_{C_{1}  \in \mathcal{P}_{1}^{F}} m_{\overline{\varphi}}(C_{1} ) )$ and $K= \exp\big{(} \sum_{j=1}^{\infty} V_{j}(\overline{\varphi})\big{)}$.
	\end{lemma} 
	
	\begin{proof}
		Since $m_{\overline{\varphi}}$ is a conformal measure, for $C_{n}  \in \mathcal{P}_{n}^{F}$ we have	
		$$
		1 = m_{\overline{\varphi}}(U_{\infty}) = m_{\overline{\varphi}}\left(\bigcup_{P\in\mathcal P}^{\infty}P \right) = m_{\overline{\varphi}}(F^{n}(C_{n} )) = \int_{C_{n} } e^{-S_{n}\overline{\varphi}} d m_{\overline{\varphi}},
		$$	
	since $d m_{\overline{\varphi}} = e^{S_{n}\overline{\varphi}} d m_{\overline{\varphi}} \circ F^{n}$. By the Intermediate Value Theorem we may choose $x \in C_{n} $ so that $e^{S_{n}\overline{\varphi}(x)} = m_{\overline{\varphi}} (C_{n} )$. Indeed, if either $e^{S_{n}\overline{\varphi}(x)} < m_{\overline{\varphi}} (C_{n} )$ or $e^{S_{n}\overline{\varphi}(x)} > m_{\overline{\varphi}} (C_{n} )$	for every $x \in C_{n} $, then we would have either $\int_{C_{n} } e^{-S_{n}\overline{\varphi}} d m_{\overline{\varphi}} > 1$ or $\int_{C_{n} } e^{-S_{n}\overline{\varphi}} d m_{\overline{\varphi}} < 1$. The continuity of $e^{-S_{n}\overline{\varphi}}$ guarantees the existence of $x$ as above.
		Therefore,
		$$
		m_{\overline{\varphi}}(C_{n} ) = e^{S_{n}\overline{\varphi}(x)} \leq e^{n\sup \overline{\varphi}}.
		$$
		By the Gibbs property,
		$$
		\displaystyle e^{\sup \overline{\varphi}} \leq K \sup_{C_{1} \in \mathcal{P}_{1}^{F}} m_{\overline{\varphi}}(C_{1}).
		$$
		Therefore
		$$
		\displaystyle \sup \overline{\varphi} \leq \log \Bigg{(} K \sup_{C_{1} \in \mathcal{P}_{1}^{F}} m_{\overline{\varphi}}(C_{1}) \Bigg{)},
		$$
	and so this can be chosen   the value for $-\lambda$.
	\end{proof}

	We call $K= \exp\big{(} \sum_{j=1}^{\infty} V_{j}(\overline{\varphi})\big{)}$
		  the \textit{distortion} of the potential $\overline\varphi$ with respect to the inducing scheme $(F, {\mathcal P})$. 
		  
		  \begin{remark}\label{re.large}
	Note that, given an   inducing scheme $(\tilde F, \tilde{\mathcal P})$ and  
	$k \in \mathbb{N}$, if the distortion of the potential $\overline{\varphi}$  with respect to   $(\tilde F, \tilde{\mathcal P})$ is bounded by some constant $K>0$, then the distortion of $\overline \varphi$ with respect to the inducing scheme $(F,\mathcal{P})$, where $F = \tilde{F}^{k}$ and $\mathcal P=\mathcal{P}^{\tilde F}_k$,  is still bounded by $K$. Lemma~\ref{Intermediate} shows that  
choosing $F = \tilde{F}^{k}$ and  $\mathcal P=\mathcal{P}^{\tilde F}_k$, for    $k$ large   we   have $\lambda$   large. 
	\end{remark}


	Note that as in \cite[Lemma 3]{Sa5},  we  have
	\begin{equation*}
P_{G}( {\varphi}) = 0\implies P_{G}(\overline{\varphi}) = 0.
\end{equation*}	
	In Lemma $\ref{Pressure}$ below, we will use the Variational Principle to bound the free energy of measures for the scheme which, for some $\gamma$, have  
	$\mu(C_{n}^i ) \leq Km_{\overline{\varphi}}(C_{n}^i )(1 - \gamma)/(1 - m_{\overline{\varphi}}(C_{n}^i ))^{n}$ in terms of the Gurevich pressure. However,
	instead of using $\overline{\varphi}$, which, in the computation of Gurevich pressure weights points $x \in C_{n}^{i}$ by $e^{\overline{\varphi}(x)}$, we use a potential
	which weights points in $C_{n}^{i}$ by $(1 - \gamma)e^{\overline{\varphi}(x)}$.	
	We define the potential $\overline{\varphi}_{n,i}^{\flat}$ by
	$$
	\overline{\varphi}_{n,i}^{\flat}(x) =
	\begin{cases}
		\overline{\varphi}(x) + \log (1 - \gamma), & \text{if }   x \in C_{n}^{i};\\
		\overline{\varphi}(x)  , & \text{if }  x \in C_{n}^{k}, \text{ with }   k \neq i .\\
	\end{cases}
	$$
In the sequel we will use   $A = \theta^{\pm C}$ to mean $\theta^{-C} \leq A \leq \theta^{C}$.

	\begin{lemma}
		\label{Gurevich}
		  $P_{G}(\overline{\varphi}_{n,i}^{\flat}) = \log (1 - \gamma m_{\overline{\varphi}}(C_{n}^i))$. 
	\end{lemma}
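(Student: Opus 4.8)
The plan is to compute the Gurevich pressure of $\bar\varphi^\flat$ directly from its definition as an exponential growth rate of weighted periodic orbits, exploiting that $\bar\varphi^\flat$ differs from $\bar\varphi$ only by the additive constant $\log(1-\gamma)$ on the single cylinder $C_n$ (here $C_n = C_n^i$). Recall that
$$
P_G(\bar\varphi^\flat) = \lim_{N\to\infty}\frac1N\log\Bigl(\sum_{\sigma^N(x)=x,\ x_0=a} e^{\bar\varphi^\flat_N(x)}\Bigr),
$$
and that by Theorem~\ref{Sarig} this limit exists and is independent of the choice of base symbol $a$. For a periodic point $x$ of period $N$, the Birkhoff sum satisfies $\bar\varphi^\flat_N(x) = \bar\varphi_N(x) + k(x)\log(1-\gamma)$, where $k(x) = \#\{0\le j\le N-1 : \sigma^j(x)\in C_n\}$ is the number of visits of the orbit of $x$ to the cylinder $C_n$. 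Hence
$$
\sum_{\sigma^N(x)=x,\ x_0=a} e^{\bar\varphi^\flat_N(x)} = \sum_{\sigma^N(x)=x,\ x_0=a} e^{\bar\varphi_N(x)}(1-\gamma)^{k(x)}.
$$

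First I would organize the periodic orbits according to their visits to $C_n$: write the sum as $\sum_{r\ge 0}(1-\gamma)^r Z_{N,r}(a)$ where $Z_{N,r}(a)$ collects the weight $e^{\bar\varphi_N(x)}$ over period-$N$ points based at $a$ with exactly $r$ visits to $C_n$. The key input is that $P_G(\bar\varphi)=0$, so $Z_N(a):=\sum_r Z_{N,r}(a)$ grows subexponentially; more precisely, for the induced (full or BIP) shift associated to an inducing scheme, $\bar\varphi$ being $\log\lambda$-conformal with $\lambda=e^{P_G(\bar\varphi)}=1$ and the Gibbs property give the sharper two-sided control $Z_N(a)\asymp 1$ (up to the distortion constant $K$), since the mass of the conformal measure is distributed over cylinders with multiplicative bounded distortion. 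The heart of the computation is then a renewal/combinatorial identity: decompose each periodic orbit into its maximal excursions between consecutive visits to $C_n$, so that the generating function $\sum_N \sum_{\sigma^N x = x} e^{\bar\varphi^\flat_N(x)} z^N$ factors through the first-return generating function to $C_n$. Writing $q = m_{\bar\varphi}(C_n)$ for the conformal mass of $C_n$, the conformality of $m_{\bar\varphi}$ with $\lambda=1$ shows that the total $e^{\bar\varphi}$-weight of first-return loops to $C_n$ of all lengths equals $q$ (the conformal measure of $F(C_n)=\mathcal U$ restricted appropriately is $1$, and the return loops partition it), while in the tilted potential each such loop carries an extra factor $(1-\gamma)$. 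Summing the geometric-type series in the number of returns, the periodic-orbit generating function for $\bar\varphi^\flat$ has its radius of convergence governed by $1-(1-\gamma)q = 1-\gamma q$ rather than by $1-q$, and therefore
$$
P_G(\bar\varphi^\flat) = \log\bigl(1-\gamma m_{\bar\varphi}(C_n)\bigr).
$$

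Concretely the steps, in order, are: (1) record $\bar\varphi^\flat_N(x)=\bar\varphi_N(x)+k(x)\log(1-\gamma)$ and reduce $P_G(\bar\varphi^\flat)$ to analyzing $\sum_r (1-\gamma)^r Z_{N,r}(a)$; (2) using conformality of $m_{\bar\varphi}$ with eigenvalue $1$ and the bounded-distortion constant $K$, establish that the total $\bar\varphi$-weight of length-$N$ first-return loops to $C_n$ summed over $N$ equals $m_{\bar\varphi}(C_n)=q$ (this is where $P_G(\bar\varphi)=0$ is used in an essential way); (3) perform the excursion decomposition to write the $\bar\varphi^\flat$-periodic generating function as a ratio whose denominator is $1-(1-\gamma)q$ plus higher-order correction terms coming from excursions away from $C_n$ that do not return, which are controlled by $P_G(\bar\varphi)=0$ and so do not enlarge the exponential rate; (4) extract the exponential growth rate, i.e.\ $-\log$ of the radius of convergence, to get $\log(1-\gamma q)$; (5) invoke Theorem~\ref{Sarig} to confirm independence of the base symbol $a$ and that the $\limsup$ is a genuine limit.

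I expect the main obstacle to be step (2)--(3): making the excursion/renewal decomposition rigorous for a countable Markov shift and correctly identifying the $\bar\varphi$-weight of return loops with the conformal mass $m_{\bar\varphi}(C_n)$. One must be careful that the inducing scheme's shift is a full shift (or at least has the BIP property), so that every finite word over the remaining symbols concatenated with the return to $C_n$ is admissible, and that the locally H\"older / summable-variation hypothesis on $\bar\varphi$ lets us replace $\bar\varphi_N(x)$ on a cylinder by the conformal mass of that cylinder up to the fixed factor $K^{\pm1}$ (the $A=\theta^{\pm C}$ notation introduced just before the lemma). Since $K$ and the finitely many correction terms contribute only subexponentially, they disappear upon taking $\frac1N\log(\cdot)$, and this is what ultimately pins the pressure to the clean value $\log(1-\gamma m_{\bar\varphi}(C_n))$. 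This argument follows the lines of Iommi--Todd \cite{IT2} and Sarig \cite{Sa5}, as indicated in the text preceding the lemma.
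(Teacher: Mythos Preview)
Your renewal/excursion approach is different from the paper's and, as written, contains an error. The paper argues much more directly: on the full shift, each $(j+1)$-cylinder inside the base cylinder $C_1^i$ is obtained by appending one symbol to a $j$-cylinder, so the partition function $Z_j(\bar\varphi^\flat,C_1^i)$ satisfies a multiplicative recursion up to distortion. Using conformality (with $P_G(\bar\varphi)=0$) and the Intermediate Value Theorem to pick $x_{C_1^k}\in C_1^k$ with $m_{\bar\varphi}(C_1^k)=e^{\bar\varphi(x_{C_1^k})}$, one has $\sum_k e^{\bar\varphi_k^\flat}=(1-\gamma)e^{\bar\varphi_i}+\sum_{k\neq i}e^{\bar\varphi_k}=1-\gamma e^{\bar\varphi_i}=1-\gamma\, m_{\bar\varphi}(C_n)$. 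Iterating gives $Z_{j+1}=(1-\gamma\, m_{\bar\varphi}(C_n))^{j}$ up to a factor bounded by $e^{\pm\sum_k (k+1)V_k(\bar\varphi)}$, which is finite since $\bar\varphi$ is locally H\"older; hence $P_G(\bar\varphi^\flat)=\log(1-\gamma\, m_{\bar\varphi}(C_n))$. No excursion decomposition or generating functions are needed.

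Your excursion argument could in principle be made to work, but the key identification in your step~(2) is incorrect: the total $e^{\bar\varphi}$-weight of first-return loops to $C_n$ is $1$, not $q=m_{\bar\varphi}(C_n)$. In the locally-constant model a first-return loop of length $k$ has weight $q(1-q)^{k-1}$, and $\sum_{k\ge1}q(1-q)^{k-1}=1$; this is precisely what encodes $P_G(\bar\varphi)=0$. After tilting, each loop acquires one factor $(1-\gamma)$, so the first-return generating function becomes $W^\flat(z)=(1-\gamma)qz/(1-(1-q)z)$, and $W^\flat(z)=1$ at $z=1/(1-\gamma q)$, which does give the correct pressure $\log(1-\gamma q)$. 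But your displayed identity $1-(1-\gamma)q=1-\gamma q$ is simply false (the left side equals $1-q+\gamma q$), and your phrase ``governed by $1-q$'' for the untilted case would force $q=0$. So the bookkeeping as you wrote it does not go through; the paper's one-step multiplicative recursion avoids these pitfalls entirely and is both shorter and less delicate.
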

	
	\begin{proof}
		We assume  $n=1$, since the general case follows similarly. We will estimate $Z_{j}(\overline{\varphi}_{1,i}^{\flat}, C_{1}^{i})$, where $Z_{j}$ is defined by
		
		$$
		Z_{j}(\Phi,C) := \sum_{x :  \sigma^{j}(x) = x}\exp(S_{j}\Phi(x)) \chi_{C}(x) 
		$$
		The ideas we use here are similar to those in the proof of Claim 2 in the proof of \cite[Proposition 2]{BT}. It follows  from the definition that
		$$
		\displaystyle Z_{j}(\overline{\varphi}_{1,i}^{\flat}, C_{1}^{i}) = e^{\pm \sum_{k=0}^{j-1}V_{k}(\overline{\varphi})} \sum_{C_{j} \in \mathcal{P}_{j}^{F} \cap C_{1}^{i}}
		 \sum_{x \in C_{j}} e^{S_{j}\overline{\varphi}_{1,i}^{\flat}(x)}.
		$$		
		As in the proof of Lemma $\ref{Gibbs}$, the conformality of $m_{\overline{\varphi}}$ and the Intermediate Value Theorem imply that for any $k$ there is 
		$x_{C_{1}^{k}} \in C_{1}^{k}$ such that $m_{\overline{\varphi}}(C_{1}^{k}) = e^{\overline{\varphi}(x_{C_{1}^{k}})}$. Writing 
		$\overline{\varphi}_{k}  = \overline{\varphi}(x_{C_{1}^{k}})$ and $\overline{\varphi}_{k}^{\flat} = \overline{\varphi}_{1,i}^{\flat}(x_{C_{1}^{k}})$, we have $e^{\overline{\varphi}_{i}^{\flat}} = (1 - \gamma) e^{\overline{\varphi}_{i}}$.
		Therefore,	
		$$
		\displaystyle \sum_{k} e^{\overline{\varphi}_{k}^{\flat}} = \sum_{k \neq i} e^{\overline{\varphi}_{k}} + (1-\gamma)e^{\overline{\varphi}_{i}} = \sum_{k \neq i} m_{\overline{\varphi}}(C_{1}^{k}) +   (1-\gamma)e^{\overline{\varphi}_{i}} = 1-m_{\overline{\varphi}}(C_{1}^{i}) +   (1-\gamma)e^{\overline{\varphi}_{i}} = 1 - \gamma e^{\overline{\varphi}_{i}}.
		$$
		For any $C_{j} \in \mathcal{P}_{j}^{F}$ and  any $k \in \mathbb{N}$, there exists a unique $C_{j+1} \subset C_{j}$ such that $F^{j}(C_{j+1}) = C_{1}^{k}$.
		Moreover, there exists $x_{C_{j+1}} \in C_{j+1}$ such that $F^{j}(x_{C_{j+1}}) = x_{C_{1}^{k}}$. Then, for $C_{j} \subset C_{1}^{i}$,	
		$$
		\displaystyle \sum_{C_{j+1} \subset C_{j}} e^{S_{j+1} \overline{\varphi}_{1,i}^{\flat}(x_{C_{j+1}})} = e^{\pm V_{j+1}(\overline{\varphi})}
		e^{S_{j} \overline{\varphi}_{1,i}^{\flat}(x_{C_{j}})} \Bigg{(} \sum_{k} e^{\overline{\varphi}_{k}^{\flat}}\Bigg{)}
		= e^{\pm V_{j+1}(\overline{\varphi})}
		e^{S_{j} \overline{\varphi}_{1,i}^{\flat}(x_{C_{j}})} (1 - \gamma e^{\overline{\varphi}_{i}}).
		$$
		Therefore,
		$$
		Z_{j+1}(\overline{\varphi}_{1,i}^{\flat},C_{1}^{i}) = (1 - \gamma e^{\overline{\varphi}_{i}})
		e^{\pm \big{(}V_{j+1}(\overline{\varphi}) + \sum_{k=0}^{j-1}V_{k}(\overline{\varphi})\big{)}} Z_{j}(\overline{\varphi}_{1,i}^{\flat},C_{1}^{i}),
		$$
		hence
		$$
		Z_{j+1}(\overline{\varphi}_{1,i}^{\flat},C_{1}^{i}) = (1 - \gamma e^{\overline{\varphi}_{i}})^{j} e^{\pm \sum_{k=0}^{j} (k+1)V_{k}(\overline{\varphi})}.
		$$
		We remind that $\overline{\varphi}$ is weakly H\"older, so $\sum_{k=0}^{j} (k+1)V_{k}(\overline{\varphi}) < \infty$. Therefore we have 
		$P_{G}(\overline{\varphi}_{1,i}^{\flat}) = \log(1 - \gamma e^{\overline{\varphi}_{i}}) = \log(1 - \gamma m_{\overline{\varphi}}(C_{1}^{i}))$, thus proving the lemma.
	\end{proof}

	Let $\mathcal{M}_{F}(\overline{\varphi})$ denote the set of $F$-invariant probability measures such that $-\int \overline{\varphi} d \mu < \infty$.

	\begin{lemma}
		\label{Pressure}
		We have   $\mathcal{M}_{F}(\overline{\varphi}) = \mathcal{M}_{F}(\overline{\varphi}_{n,i}^{\flat})$ and, for any cylinder $C_{n}^{i} \in \mathcal{P}_{n}^{F}$,
		\begin{eqnarray*} 
			 \sup & &\hspace{-.9cm} \left\{  h_{F}(\mu) + \int \overline{\varphi} d \mu : \mu \in \mathcal{M}(\overline{\varphi}),\; \mu(C_{n}^{i}) < \frac{K(1 - \gamma)}{1 - m_{\overline{\varphi}}(C_{n}^{i})^{n}}
			m_{\overline{\varphi}}(C_{n}^{i}) \right\} \\
			& \leq &
			\sup\left\{ h_{F}(\mu) + \int \overline{\varphi}_{n}^{\flat} d \mu : \mu \in \mathcal{M}(\overline{\varphi}_{n,i}^{\flat}),\; \mu(C_{n}^{i}) < \frac{K(1 - \gamma)}{1 - m_{\overline{\varphi}}(C_{n}^{i})^{n}}
			m_{\overline{\varphi}}(C_{n}^{i}) \right\}  \\
			&& - \left[ \frac{K(1 - \gamma)\log(1-\gamma)}{1 - m_{\overline{\varphi}}(C_{n}^{i})^{n}}\right]
			m_{\overline{\varphi}}(C_{n}^{i})  \\
			& \leq &
			P_{G}(\overline{\varphi}_{n,i}^{\flat}) - \left[ \frac{K(1 - \gamma)\log(1-\gamma)}{1 - m_{\overline{\varphi}}(C_{n}^{i})^{n}}
			m_{\overline{\varphi}}(C_{n}^{i})\right]. 
		\end{eqnarray*}
	\end{lemma}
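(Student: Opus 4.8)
\textbf{Proof proposal for Lemma \ref{Pressure}.}

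The plan is to establish the two inequalities and the identity $\mathcal{M}_{F}(\bar{\varphi}) = \mathcal{M}_{F}(\bar{\varphi}^{\flat})$ separately, and then invoke the variational principle for countable Markov shifts (the Iommi--Jordan--Todd theorem quoted above) together with Lemma \ref{Gurevich}. First I would observe that the two potentials differ only by the bounded quantity $\log(1-\gamma)\cdot\chi_{C_{n}^{i}}$, so $-\int\bar{\varphi}d\mu<\infty$ if and only if $-\int\bar{\varphi}^{\flat}d\mu<\infty$; this gives the equality of the two classes of measures immediately, and in particular the suprema in the statement range over the same set of measures.

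For the first inequality, take any $\mu\in\mathcal{M}(\bar{\varphi})$ satisfying the cylinder constraint $\mu(C_{n}^{i})<\frac{K(1-\gamma)}{1-m_{\bar{\varphi}}(C_{n}^{i})^{n}}\mu_{\bar{\varphi}}(C_{n}^{i})$. By definition of $\bar{\varphi}^{\flat}$,
$$
\int\bar{\varphi}d\mu=\int\bar{\varphi}^{\flat}d\mu-\log(1-\gamma)\,\mu(C_{n}^{i}),
$$
so that
$$
h_{F}(\mu)+\int\bar{\varphi}d\mu=h_{F}(\mu)+\int\bar{\varphi}^{\flat}d\mu-\log(1-\gamma)\,\mu(C_{n}^{i}).
$$
Since $\gamma\in(0,1)$ we have $\log(1-\gamma)<0$, hence $-\log(1-\gamma)>0$, and the constraint on $\mu(C_{n}^{i})$ lets us bound
$$
-\log(1-\gamma)\,\mu(C_{n}^{i})\le-\log(1-\gamma)\cdot\frac{K(1-\gamma)}{1-m_{\bar{\varphi}}(C_{n}^{i})^{n}}\mu_{\bar{\varphi}}(C_{n}^{i})=-\frac{K(1-\gamma)\log(1-\gamma)}{1-m_{\bar{\varphi}}(C_{n}^{i})^{n}}\mu_{\bar{\varphi}}(C_{n}^{i}).
$$
Taking the supremum over all such $\mu$ on the left and noting that the same constraint defines the admissible set on the right yields the first inequality. (One must check the sign of the error term is consistent with the way $A=\theta^{\pm C}$-style estimates propagated the distortion factor $K$ into the constraint; this is the one place where I expect to have to be careful, since the direction of the inequality depends delicately on whether the constraint is an upper or lower bound on $\mu(C_{n}^{i})$ and on the sign of $\log(1-\gamma)$.)

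For the second inequality, I would simply drop the cylinder constraint from the supremum on the right-hand side of the first inequality — enlarging the feasible set can only increase the supremum — and then apply the Iommi--Jordan--Todd variational principle to identify
$$
\sup\Big\{h_{F}(\mu)+\int\bar{\varphi}^{\flat}d\mu:\mu\in\mathcal{M}(\bar{\varphi}^{\flat})\Big\}=P_{G}(\bar{\varphi}^{\flat}),
$$
which is legitimate because $\bar{\varphi}^{\flat}$ has summable variation (it differs from the locally H\"older $\bar{\varphi}$ by a function constant on cylinders of length $n$, so $V_{k}(\bar{\varphi}^{\flat})=V_{k}(\bar{\varphi})$ for all $k>n$ and the first $n$ terms are finite). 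Combining with the bound on the error term from the previous step gives the stated chain of inequalities. The main obstacle, as noted, is bookkeeping the distortion constant $K$ and the factor $1/(1-m_{\bar{\varphi}}(C_{n}^{i})^{n})$ correctly through the estimates so that the constraint appearing in the lemma is exactly the one under which the argument closes; the dynamical content is entirely contained in Lemma \ref{Gurevich} and the variational principle, both already available.
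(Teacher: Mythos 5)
Your proposal is correct and follows the argument the paper intends (the paper defers the proof to the analogous computations in Iommi--Todd, and your route is exactly that): write $\bar{\varphi}^{\flat} = \bar{\varphi} + \log(1-\gamma)\chi_{C_n^i}$, note the potentials differ by a bounded function so the admissibility classes coincide, use the upper bound on $\mu(C_n^i)$ together with $-\log(1-\gamma)>0$ to push the error term through the supremum, then drop the constraint and apply the variational principle for the summable-variation potential $\bar{\varphi}^{\flat}$ to obtain $P_G(\bar{\varphi}^{\flat})$. Your sign bookkeeping and the observation that $V_k(\bar{\varphi}^{\flat})=V_k(\bar{\varphi})$ for $k\ge n$ are the right points to verify, and you have them right.
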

	
	\begin{proof}
	The fact that $\mathcal{M}_{F}(\overline{\varphi}) = \mathcal{M}_{F}(\overline{\varphi}_{n,i}^{\flat})$ is clear from the definition.
	Suppose now that $\mu \in \mathcal{M}_{F}(\overline{\varphi})$ and $\mu(C_{n}^{i}) \leq m_{\overline{\varphi}}(C_{n}^{i})K(1 - \gamma)/(1 - m_{\overline{\varphi}}(C_{n}^{i}))^{n}$.
	Then
	\begin{align*}
	\left(h_{F}(\mu) + \int \overline{\varphi} d\mu \right) - \left(h_{F}(\mu) + \int \overline{\varphi}_{n}^{\flat} d\mu \right) 
	&= 
	\int (\overline{\varphi} - \overline{\varphi}_{n}^{\flat}) d \mu\\
	&= \mu(C_{n}^{i})\left(-\log(1 - \gamma)\right) \\
	&\leq - \left[ \frac{K(1 - \gamma)\log(1-\gamma)}{1 - m_{\overline{\varphi}}(C_{n}^{i})^{n}}
	m_{\overline{\varphi}}(C_{n}^{i})\right].
\end{align*}
This proves  the first inequality in the statement. The second  inequality follows from the definition of pressure.
	\end{proof}

	We can actually prove that the last  inequality is   an equality, but we do not need it
	 here. Lemmas $\ref{Gurevich}$ and $\ref{Pressure}$ give that for $C_{n}^{i} \in \mathcal{P}_{n}^{F}$ with $\mu(C_{n}^{i}) < K(1 - \gamma)m_{\overline{\varphi}}(C_{n}^{i})/
	(1 - m_{\overline{\varphi}}(C_{n}^{i})^{n})$, we   have
	\begin{eqnarray}
		h_{F}(\mu) + \int \overline{\varphi}_{n,i}^{\flat} d \mu \nonumber
		&\leq &
		P_{G}(\overline{\varphi}_{n,i}^{\flat}) - \bigg{[} \frac{K(1 - \gamma)\log(1-\gamma)}{1 - m_{\overline{\varphi}}(C_{n}^{i})^{n}}
		m_{\overline{\varphi}}(C_{n}^{i})\bigg{]}
		\\
		\label{eqn2}
		&= & \log (1 - \gamma m_{\overline{\varphi}}(C_{n}^i)) - \bigg{[} \frac{K(1 - \gamma)\log(1-\gamma)}{1 - m_{\overline{\varphi}}(C_{n}^{i})^{n}}
		m_{\overline{\varphi}}(C_{n}^{i})\bigg{]}.
	\end{eqnarray}
	Now, if $m_{\overline{\varphi}}(C_{n}^{i})$ is sufficiently small, then $\log (1 - \gamma m_{\overline{\varphi}}(C_{n})) \approx - \gamma m_{\overline{\varphi}}(C_{n})$. Moreover, by Lemma~\ref{Intermediate}, we have $m_{\overline{\varphi}}(C_{n}^{i}) < e^{-\lambda n}$. Therefore, choosing $k$  large (which gives~$\lambda$ large, by Remark~\ref{re.large}), we have for some choice of $  \tilde{\gamma}^{\flat} \in (0,1)$ sufficiently close to 1,  
%
%
	\begin{equation}\label{deltaflat}
	\delta_{n,i}^{\flat} := \log (1 - \tilde{\gamma}^{\flat} m_{\overline{\varphi}}(C_{n}^i)) - \bigg{[} \frac{K(1 - \tilde{\gamma}^{\flat})\log(1-\tilde{\gamma}^{\flat})}{1 - m_{\overline{\varphi}}(C_{n}^{i})^{n}}
	m_{\overline{\varphi}}(C_{n}^{i})\bigg{]}<0.
\end{equation}

	For the upper bound on the free energy of measures giving $C_{n}^{i}$ relatively large mass, we follow a similar proof, but with the potential
	$\overline{\varphi}_{n,i}^{\sharp}$ defined as 
	$$
	\overline{\varphi}_{n,i}^{\sharp}(x) =  
	\begin{cases}
		\overline{\varphi}(x) - \log (1 - \gamma), & \text{if} \,\, x \in C_{n}^{i};  \\
		\overline{\varphi}(x),   & \text{if} \,\, x \in C_{n}^{k}, \text{ with }  k \neq i .\\
	\end{cases}
	$$
	Similarly to Lemmas $\ref{Gurevich}$ and $\ref{Pressure}$ above, we can prove the next two lemmas.

	\begin{lemma}
		    $P_{G}(\overline{\varphi}_{n,i}^{\sharp}) = \log \Big{(}1 +  m_{\overline{\varphi}}(C_{n})\Big{(}\frac{\gamma}{1-\gamma}\Big{)}\Big{)}
		    $. 
	\end{lemma}

	 \begin{lemma}
		We have  $\mathcal{M}_{F}(\overline{\varphi}) = \mathcal{M}_{F}(\overline{\varphi}_{n}^{\sharp})$ and, for any cylinder $C_{n}^{i} \in \mathcal{P}_{n}^{F}$,  
		\begin{align*}
			 \sup&\left\{ h_{F}(\mu) + \int \overline{\varphi} d \mu : \mu \in \mathcal{M}(\overline{\varphi}), \;
			\mu(C_{n}^{i}) >
			 \frac{m_{\overline{\varphi}}(C_{n}^{i}) }{K(1-\gamma)\left(1 +  m_{\overline{\varphi}}(C_{n})\left(\frac{\gamma}{1-\gamma}\right)\right)^{n}}
			\right\}  \\
			\leq &
			\sup \left\{ h_{F}(\mu) + \int \overline{\varphi}_{n,i}^{\sharp} d \mu : \mu \in \mathcal{M}(\overline{\varphi}_{n,i}^{\sharp}), \;\mu(C_{n}^{i}) 
			> \frac{m_{\overline{\varphi}}(C_{n}^{i})}{K(1-\gamma)\left(1 +  m_{\overline{\varphi}}(C_{n})\left(\frac{\gamma}{1-\gamma}\right)\right)^{n}}
			 \right\} + \\
			& +  \left[ \frac{\log(1-\gamma)m_{\overline{\varphi}}(C_{n}^{i})}{K(1-\gamma)\left(1 +  m_{\overline{\varphi}}(C_{n})\left(\frac{\gamma}{1-\gamma}\right)\right)^{n}}\right]\\
			\leq &
			P_{G}(\overline{\varphi}_{n,i}^{\sharp}) + \left[ \frac{\log(1-\gamma)m_{\overline{\varphi}}(C_{n}^{i})}{K(1-\gamma)\left(1 +  m_{\overline{\varphi}}(C_{n})\left(\frac{\gamma}{1-\gamma}\right)\right)^{n}}\right].\\
		\end{align*}
	\end{lemma}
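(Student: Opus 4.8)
The plan is to run, \emph{mutatis mutandis}, the argument that established Lemma~\ref{Pressure}, replacing the subtraction of $\log(1-\gamma)$ on $C_{n}^{i}$ by an addition and keeping careful track of the sign reversals this forces.

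\textbf{Step 1 (the two classes of measures coincide and $\bar\varphi^{\sharp}$ is admissible).} The key elementary observation is that $\bar\varphi^{\sharp}-\bar\varphi=-\log(1-\gamma)\,\chi_{C_{n}^{i}}$ is a bounded function, because $\gamma\in(0,1)$. Hence $-\int\bar\varphi^{\sharp}\,d\mu<\infty$ if and only if $-\int\bar\varphi\,d\mu<\infty$, so $\mathcal{M}_{F}(\bar\varphi^{\sharp})=\mathcal{M}_{F}(\bar\varphi)$ and the two constrained sets in the statement are the same. The same observation, together with the fact that $\chi_{C_{n}^{i}}$ is constant on every cylinder of length at least $n$, gives $V_{j}(\bar\varphi^{\sharp})=V_{j}(\bar\varphi)$ for $j\ge n$ and $V_{j}(\bar\varphi^{\sharp})\le V_{j}(\bar\varphi)+|\log(1-\gamma)|$ for $j<n$; thus $\bar\varphi^{\sharp}$ still has summable variation. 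Combined with the preceding lemma, which yields $P_{G}(\bar\varphi^{\sharp})=\log\bigl(1+m_{\bar\varphi}(C_{n})\frac{\gamma}{1-\gamma}\bigr)<\infty$, this lets us apply to $\bar\varphi^{\sharp}$ the Iommi--Jordan--Todd variational principle and Sarig's theorem exactly as we did for $\bar\varphi$.

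\textbf{Step 2 (the free-energy identity and the first inequality).} Write $A:=\mu_{\bar\varphi}(C_{n}^{i})\big/\bigl(K(1-\gamma)\bigl(1+m_{\bar\varphi}(C_{n})\frac{\gamma}{1-\gamma}\bigr)^{n}\bigr)>0$ for the quantity appearing in the constraint. For any $\mu$ in the constrained set one has $\int\bar\varphi^{\sharp}\,d\mu=\int\bar\varphi\,d\mu-\mu(C_{n}^{i})\log(1-\gamma)$, hence
\[
h_{F}(\mu)+\int\bar\varphi\,d\mu=\Bigl(h_{F}(\mu)+\int\bar\varphi^{\sharp}\,d\mu\Bigr)+\mu(C_{n}^{i})\log(1-\gamma).
\]
Since $\log(1-\gamma)<0$, multiplying the constraint $\mu(C_{n}^{i})>A$ by $\log(1-\gamma)$ reverses the inequality and gives $\mu(C_{n}^{i})\log(1-\gamma)<A\log(1-\gamma)$, which is exactly the bracketed correction term. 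Substituting this bound and taking the supremum over all admissible $\mu$ yields the first inequality; if the constrained set is empty both suprema equal $-\infty$ and there is nothing to prove.

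\textbf{Step 3 (the second inequality).} The constrained supremum of $h_{F}(\mu)+\int\bar\varphi^{\sharp}\,d\mu$ is at most the unconstrained one, which by the Iommi--Jordan--Todd variational principle applied to $\bar\varphi^{\sharp}$ (legitimate by Step~1) equals $P_{G}(\bar\varphi^{\sharp})$; adding $A\log(1-\gamma)$ to both sides closes the chain. The whole argument is parallel to Lemma~\ref{Pressure}, so I do not expect a genuine obstacle; the one point that needs care is the bookkeeping of inequality directions, since $\log(1-\gamma)<0$ converts the lower bound on $\mu(C_{n}^{i})$ into an \emph{upper} bound on the correction term --- which is precisely why that term appears with the displayed sign and why the estimates chain together.
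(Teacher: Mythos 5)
Your argument is correct and is precisely the one the paper intends by the phrase ``one can show that'': it mirrors Lemma~\ref{Pressure} for the $\flat$ potential, using the identity $\bar\varphi^{\sharp}=\bar\varphi-\log(1-\gamma)\chi_{C_n^i}$ to pass between free energies, the sign of $\log(1-\gamma)$ together with the lower bound on $\mu(C_n^i)$ to control the correction term, and the Iommi--Jordan--Todd variational principle applied to $\bar\varphi^{\sharp}$ (legitimate since, as you note, the difference from $\bar\varphi$ is bounded, so summable variation and finiteness of $-\int\bar\varphi^{\sharp}\,d\mu$ persist) to cap the constrained supremum by $P_G(\bar\varphi^{\sharp})$. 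No gap.
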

It follows from the last two lemmas that  if
	$$
	\mu(C_{n}^{i}) > \frac{m_{\overline{\varphi}}(C_{n})}{K(1-\gamma)\Big{(}1 +  m_{\overline{\varphi}}(C_{n})\Big{(}\frac{\gamma}{1-\gamma}\Big{)}\Big{)}^{n}},
	$$
	then  
	\begin{equation}
		\label{eqn3}
		h_{F}(\mu) + \int \overline{\varphi} d\mu \leq m_{\overline{\varphi}}(C_{n}^{i})\bigg{(} \frac{\gamma}{1 - \gamma} \bigg{)} + 
		\frac{\log(1-\gamma)m_{\overline{\varphi}}(C_{n}^{i})}{K(1-\gamma)\Big{(}1 +  m_{\overline{\varphi}}(C_{n})\Big{(}\frac{\gamma}{1-\gamma}\Big{)}\Big{)}^{n}}.
	\end{equation}
Similarly to what we did for obtaining~\eqref{deltaflat}, taking  $k$ large  (which gives $\lambda$ large), we are also able to choose $ \tilde{\gamma}^{\sharp} \in (0,1)$ close to 1 so that 
%
%
	$$
	\delta_{n,i}^{\sharp} := m_{\overline{\varphi}}(C_{n}^{i})\bigg{(} \frac{\tilde{\gamma}^{\sharp}}{1 - \tilde{\gamma}^{\sharp}} \bigg{)} + 
	\frac{\log(1-\tilde{\gamma}^{\sharp})m_{\overline{\varphi}}(C_{n}^{i})}{K(1-\tilde{\gamma}^{\sharp})\Big{(}1 +  m_{\overline{\varphi}}(C_{n}^i)\Big{(}\frac{\tilde{\gamma}^{\sharp}}{1-\tilde{\gamma}^{\sharp}}\Big{)}\Big{)}^{n}}<0.
	$$
	This can be seen as follows: let $\tilde{\gamma}^{\sharp} = p/(p + 1)$ for some $p$ to be chosen later.
	Then $\delta_{n}^{i,\sharp}$ becomes
	\begin{eqnarray}
		\label{eqn4}
		\delta_{n,i}^{\sharp} = m_{\overline{\varphi}}(C_{n}^{i})(p + 1)\bigg{[}\frac{p}{p+1} - \frac{\log(p+1)}{K(1 + pm_{\overline{\varphi}}(C_{n}^i))^{n}}\bigg{]}. 
	\end{eqnarray}
	If $\lambda$ is sufficiently large and assuming  $
		m_{\overline{\varphi}}(C_{n}^i ) \leq e^{-\lambda n},
		$   there exists some large $\lambda'<\lambda$ such that
	 $$(1 + p m_{\overline{\varphi}}(C_{n}^i ))^{n}\le (1 + pe^{-\lambda n})^{n} \leq 1 + pe^{-\lambda' n},\quad\forall n \in \mathbb{N}.
	$$
	Hence,   we can choose $p$  so that the quantity in the square brackets in \eqref{eqn4} is negative for all $n$, which then gives $\delta_{n,i}^{\sharp}<0$.

Now, set
	\begin{eqnarray}
		\label{eqn5}
		\gamma^{\sharp} = 1 - (1 - \tilde{\gamma}^{\sharp}) \Bigg{(} 1 + e^{-\lambda n}\Bigg{(}\frac{\tilde{\gamma}^{\sharp}}{1 - \tilde{\gamma}^{\sharp}}\Bigg{)}\Bigg{)}^{n}.
	\end{eqnarray}
	For sufficiently large   $\lambda$, we have   $\gamma^{\sharp}\in (0,1)$.
	Let  $$\gamma_{0}':= \max\{\gamma^{\flat}, \gamma^{\sharp}\}$$ and, for each $C_{n}^{i} \in \mathcal{P}_{n}^{F}$, 
	$$\delta_{n,i}: = \max\{\delta_{n,i}^{\flat},\delta_{n,i}^{\sharp}\}.$$ 
	The proof of Proposition~\ref{Gibbs} is complete,   setting $$\gamma_{0}:=1 - K(1 - \gamma_{0}'),$$
	which we may assume belongs in $(0,1)$. 
	
	
	\subsection{Integrability of the inducing time}
	
	To finish the proof of Theorem~\ref{A} we  use the next result, which is essentially a consequence of Theorem~\ref{Pinheiro}, Theorem~\ref{Sarigseveral} and  Proposition \ref{Gibbs}.
	
	\begin{proposition}
		\label{Finitely}
		Given a hyperbolic potential $\varphi$ with $P_{f}(\varphi)=0$, there exist finitely many ergodic equilibrium states for the system $(f,\varphi)$ and they are expanding.
		
	\end{proposition}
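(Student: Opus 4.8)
The plan is to combine Proposition~\ref{Gibbs} with a compactness argument to show that the inducing time $\tau$ is integrable with respect to the Gibbs measure $\mu_{\bar\varphi}$, and then to invoke Abramov's Formulas (Proposition~\ref{pr.zwei}) to project $\mu_{\bar\varphi}$ to an equilibrium state for $(f,\varphi)$. Fix one of the finitely many inducing schemes of Theorem~\ref{Pinheiro}; after passing to the power $F=\tilde F^k$ supplied by Proposition~\ref{Gibbs}, we have the constant $\gamma_0\in(0,1)$ and, for each cylinder $C_n^i\in\mathcal P_n^F$, a constant $\delta_n^i<0$ with the stated dichotomy. Recall from the previous section that there is a sequence of liftable ergodic probabilities $\mu_m$ on $M$ with $h_{\mu_m}(f)+\int\varphi\,d\mu_m\to P_f(\varphi)=0$; lifting them gives $F$-invariant ergodic probabilities $\nu_m$ on $\Sigma$ with $\int\tau\,d\bar\nu_m$ uniformly bounded (Theorem~\ref{Pinheiro}) and, by Abramov, $h_F(\nu_m)+\int\bar\varphi\,d\nu_m\to 0$ as well. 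Since each $\delta_n^i<0$ is fixed while the free energies $h_F(\nu_m)+\int\bar\varphi\,d\nu_m$ tend to $0$, Proposition~\ref{Gibbs} forces, for every fixed cylinder $C_n^i$, that for all large $m$
$$
(1-\gamma_0)\,m_{\bar\varphi}(C_n^i)\le \nu_m(C_n^i)\le \frac{1}{1-\gamma_0}\,m_{\bar\varphi}(C_n^i).
$$

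Next I would take a weak-$*$ accumulation point $\nu_\infty$ of the sequence $(\nu_m)$ in the space of $F$-invariant probabilities on $\Sigma$ (using that cylinders are open and closed, so $\nu_m(C_n^i)\to\nu_\infty(C_n^i)$ along a subsequence). Passing to the limit in the displayed inequality shows that $\nu_\infty$ satisfies the two-sided Gibbs-type estimate $(1-\gamma_0)m_{\bar\varphi}(C)\le\nu_\infty(C)\le (1-\gamma_0)^{-1}m_{\bar\varphi}(C)$ for every cylinder $C$; in particular $\nu_\infty$ is a Gibbs measure for $(\bar\varphi,\Sigma)$, and by the uniqueness part of Sarig's theorem quoted above, $\nu_\infty=\mu_{\bar\varphi}$. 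Moreover, upper semicontinuity of the entropy (valid here because $\int\tau\,d\bar\nu_m$ is uniformly bounded, which controls the tails and gives the required semicontinuity of $h_F(\cdot)+\int\bar\varphi\,d(\cdot)$ along the sequence — this is exactly the point imported from~\cite{IT2}) yields
$$
h_F(\mu_{\bar\varphi})+\int\bar\varphi\,d\mu_{\bar\varphi}\ge \limsup_{m}\Big(h_F(\nu_m)+\int\bar\varphi\,d\nu_m\Big)=0=P_G(\bar\varphi),
$$
so $\mu_{\bar\varphi}$ is an equilibrium state for $\bar\varphi$. The key consequence is integrability of $\tau$: from $\nu_m(C_1^i)\le(1-\gamma_0)^{-1}m_{\bar\varphi}(C_1^i)$ and the uniform bound $\int\tau\,d\bar\nu_m\le L$, one has $\sum_i \tau_i\,m_{\bar\varphi}(C_1^i)\le (1-\gamma_0)L<\infty$ by Fatou/monotone convergence, i.e. $\int\tau\,dm_{\bar\varphi}<\infty$; since $d\mu_{\bar\varphi}=h_{\bar\varphi}\,dm_{\bar\varphi}$ with $h_{\bar\varphi}$ bounded (Gibbs property), $\int\tau\,d\mu_{\bar\varphi}<\infty$.

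With $\int\tau\,d\mu_{\bar\varphi}<\infty$, the measure $\mu_{\bar\varphi}$ projects (in the sense of liftability, reversing the formula defining liftable measures) to an $f$-invariant probability $\mu_\varphi$ on $M$. Abramov's Formulas then give
$$
h_{\mu_\varphi}(f)+\int\varphi\,d\mu_\varphi=\frac{h_F(\mu_{\bar\varphi})+\int\bar\varphi\,d\mu_{\bar\varphi}}{\int\tau\,d\mu_{\bar\varphi}}=\frac{P_G(\bar\varphi)}{\int\tau\,d\mu_{\bar\varphi}}=0=P_f(\varphi),
$$
so $\mu_\varphi$ is an equilibrium state for $(f,\varphi)$; by construction its lift sits in the inducing scheme built from hyperbolic times, hence $\mu_\varphi(H)=1$ and $\mu_\varphi$ is expanding. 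Running this argument over each of the finitely many inducing schemes $(F_1,\mathcal P_1),\dots,(F_s,\mathcal P_s)$ of Theorem~\ref{Pinheiro} produces finitely many such measures. Finally, any equilibrium state $\mu$ for $(f,\varphi)$ has $h_\mu(f)+\int\varphi\,d\mu=0>P_f(\varphi,H^c)$, so by Proposition~\ref{pr.hypmeas} it is expanding; decomposing it into ergodic components (which are still equilibrium states, since the free energy is affine and the value $0$ is the supremum), each ergodic component is expanding, hence liftable to one of the finitely many schemes by Theorem~\ref{Pinheiro}, and by the uniqueness of the Gibbs/equilibrium state on each scheme it must coincide with one of the $\mu_{\varphi}$'s constructed above. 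This gives the finiteness of ergodic equilibrium states and completes the proof.

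I expect the main obstacle to be justifying the upper semicontinuity of $\mu\mapsto h_F(\mu)+\int\bar\varphi\,d\mu$ along $(\nu_m)$ on the non-compact shift $\Sigma$: entropy is not weak-$*$ upper semicontinuous in general for countable Markov shifts, and $\bar\varphi$ is unbounded below, so one must use the uniform bound on $\int\tau\,d\bar\nu_m$ (equivalently, uniform integrability/tightness coming from Theorem~\ref{Pinheiro}) together with the Gibbs comparison to control the contribution of high-level cylinders — this is precisely where the arguments of~\cite{IT2} are invoked verbatim. The second delicate point is checking that the weak-$*$ limit of liftable measures remains liftable with the right projection, but this follows from the cylinder estimates and the uniform bound on inducing times.
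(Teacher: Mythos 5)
Your proof follows the paper's approach closely: use Proposition~\ref{Gibbs} to obtain a two-sided Gibbs-type comparison for the lifted measures on each fixed cylinder, pass to a weak-$*$ limit, identify the limit as the unique Gibbs measure $\mu_{\bar{\varphi}}$, establish integrability of $\tau$, project via Abramov's formulas, and then argue uniqueness per inducing scheme (hence finiteness). All the essential ingredients are there.

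A few corrections and cautions. First, on the non-compact shift $\Sigma$ the existence of a weak-$*$ accumulation point of $(\nu_m)$ is not automatic; the paper isolates this as Lemma~\ref{Tight}, deducing tightness of the lifted sequence from the uniform bound on $\int\tilde{\tau}\,d\bar{\mu}_n$ and the fact that $\{\tilde{\tau}\le j\}$ is a finite union of compact cylinders. You use the uniform bound but do not supply the tightness argument; this step should be made explicit. Second, the detour through upper semicontinuity of entropy to show that $\mu_{\bar{\varphi}}$ is an equilibrium state is both logically out of order (it presupposes $-\int\bar{\varphi}\,d\mu_{\bar{\varphi}}<\infty$, which you establish only afterwards via $\int\tau\,d\mu_{\bar{\varphi}}<\infty$) and unnecessary: once integrability is in place, Sarig's theorem as stated in the paper already asserts that the Gibbs measure is the unique equilibrium state with $h_{\mu_{\bar{\varphi}}}(F)+\int\bar{\varphi}\,d\mu_{\bar{\varphi}}=P_G(\bar{\varphi})=0$. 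Upper semicontinuity of entropy on countable Markov shifts is delicate and your appeal to \cite{IT2} does not really justify it, so it is better removed. Third, a small slip of direction: to bound $\int\tau\,dm_{\bar{\varphi}}$ you need the lower half of the comparison, namely $m_{\bar{\varphi}}(C_1^i)\le(1-\gamma_0)^{-1}\nu_m(C_1^i)$, not the upper half $\nu_m(C_1^i)\le(1-\gamma_0)^{-1}m_{\bar{\varphi}}(C_1^i)$ that you quote; with that corrected, your Fatou argument (taking $\liminf_m$ for each fixed $i$, then summing) is fine. Apart from these points the argument matches the paper's.
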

	
	In order to prove this result, we take a sequence of $f$-invariant measures $\{\mu_{n}\}_{n}$ such that $h_{\mu_{n}}(f) + \int \varphi d \mu_{n} \to 0$, each $\mu_n$   liftable to a measure $\overline{\mu}_n$
that, with no loss of generality, we may assume all	liftings with respect to a same inducing scheme  $(\tilde F, \mathcal P)$. We will show that the set of lifted measures 
	has the Gibbs measure as its unique accumulation point with respect to which the inducing time is integrable.
	Finally, this Gibbs measure is projected to an equilibrium state for $(f,\varphi)$.
	
	We take the inducing scheme $(\tilde{F},\mathcal{P})$ with inducing time $\tilde{\tau}$ and, for   $k \in \mathbb{N}$ large enough, we consider an iterated inducing scheme $(F,\mathcal{P}_{k}^{\tilde{F}})$, with $F = \tilde{F}^{k}$, so that the conclusions of Proposition \ref{Gibbs} are satisfied. Recall that
	\[
	\mathcal{P}_{k}^{\tilde{F}} = \bigvee_{j=0}^{k-1} \tilde{F}^{-j}(\mathcal{P}).
	\]
	We say that a set $\mathcal{K}$ of measures  on the domain $X$ of $F$ is {\textit{tight}} if, for every $\epsilon > 0$, there exists a compact set $K \subset X$ 
	such that $\eta(K^{c}) < \epsilon$, for every measure $\eta \in \mathcal{K}$.
	Prohorov Theorem asserts that any sequence of measures in a tight set has some accumulation point in the weak* topology; see e.g. \cite[Theorem 2.1.8]{OV}.
	
	\begin{lemma}
		\label{Tight}
		The lifted sequence $\{\overline{\mu}_{n}\}_{n}$  is tight.
	\end{lemma}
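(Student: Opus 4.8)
\textbf{Proof plan for Lemma \ref{Tight}.}

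The plan is to exploit the fact, coming from Theorem \ref{Pinheiro}, that the sequence $\{\mu_n\}_n$ of $f$-invariant measures being lifted has \emph{uniformly bounded integral of the inducing time}, together with the estimate from Proposition \ref{Gibbs} that forces the candidate-equilibrium measures to assign controlled mass to cylinders. First I would recall that each $\bar\mu_n$ lives on $\Sigma$, a countable Markov shift whose alphabet $S$ indexes the partition $\mathcal{P}$; since $\Sigma$ is not locally compact, tightness is exactly the statement that the tails $\bigcup_{a \geq N}[a]$ (cylinders of length one over symbols with large index) carry uniformly small mass. The exhaustion of $\Sigma$ by compact sets is by finite unions of one-cylinders $[a_1]\cup\cdots\cup[a_N]$ together with their preimages under finitely many iterates of $\sigma$, so the crux is a uniform bound on $\bar\mu_n\big(\{x : x_1 = a\}\big)$ summed over large $a$.

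The key step is to relate the symbol index to the inducing time. In Pinheiro's construction the inducing time $\tau$ grows on the partition elements indexed by large symbols, so there is a function $N \mapsto T(N)$ with $T(N)\to\infty$ such that every one-cylinder $[a]$ with $a \geq N$ satisfies $\tau|_{[a]} \geq T(N)$, or at least $\sum_{a \geq N}\bar\mu_n([a])$ is dominated by $\frac{1}{T(N)}\int \tau\, d\bar\mu_n$ via a Chebyshev-type inequality $\sum_{\tau \geq T}\bar\mu_n \leq \frac{1}{T}\int\tau\,d\bar\mu_n$. Because $\int\tau\,d\bar\mu_n$ is bounded uniformly in $n$ by a constant $C$ (Theorem \ref{Pinheiro}), we get $\sum_{a \geq N}\bar\mu_n([a]) \leq C/T(N)$, which can be made smaller than any prescribed $\epsilon$ by choosing $N$ large, uniformly in $n$. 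Then I would handle the finitely many preimages needed to build a genuinely compact (i.e. $\sigma$-invariant-enough, or at least closed) set by the same estimate applied to $\sigma^j_*\bar\mu_n$ for $0 \leq j \leq m$, absorbing the extra factor $m$ into the choice of $N$; since each $\sigma^j_*\bar\mu_n$ again has the same bounded $\tau$-integral (or one controls $\bar\mu_n(\sigma^{-j}\{x_1\geq N\})$ directly using invariance of $\bar\mu_n$ under $F=\sigma$ on $\Sigma$, noting $\bar\mu_n$ is $F$-invariant so only the $j=0$ estimate is actually needed for one-cylinders), the bound persists.

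The main obstacle I anticipate is making precise the link between "large symbol" and "large inducing time": a priori the alphabet $S$ need not be ordered so that $\tau$ is monotone, so one must instead enumerate $S$ by the value of $\tau$ (or by a proper function such as the number of distinct return branches up to a given scale), observe that $\{a \in S : \tau|_{[a]} \leq T\}$ is finite for each $T$ — this finiteness is built into the definition of an inducing scheme with the $P_i$ partitioning $\mathcal{U}$ into countably many pieces of bounded-below diameter image, hence only finitely many with small return time — and then define the compact exhaustion accordingly. Granting that finiteness, the rest is the Chebyshev estimate plus the uniform integral bound, so the argument is short once the enumeration is set up; I would also remark, following \cite{IT2}, that this is precisely where the uniform bound on $\int\tau\,d\bar\mu_n$ from Theorem \ref{Pinheiro} is indispensable, since without it no single $N$ would work for the whole sequence.
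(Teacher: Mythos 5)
Your argument is essentially the paper's: both use the uniform bound $\int\tilde\tau\,d\bar\mu_n<\theta$ from Theorem~\ref{Pinheiro}, the Chebyshev/Markov inequality $\bar\mu_n(\{\tilde\tau>j\})<\theta/j$, and the observation that $\{\tilde\tau\le j\}$ is a finite union of cylinders. The extra discussion of preimages under $\sigma^j$ and the passing mention of Proposition~\ref{Gibbs} are unnecessary (as you yourself note, $F$-invariance of $\bar\mu_n$ makes the one-cylinder bound suffice), but the core of the proof matches the paper's.
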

	
	\begin{proof}
		
		Recalling that $\tilde{\tau}$ is the inducing time of $\tilde{F}$, by   Theorem~\ref{Pinheiro} there exists $\theta > 0$ such that $\int \tilde{\tau} d \overline{\mu}_{n} < \theta, $ for all $ n \in \mathbb{N}$. We claim that this implies that the set $\{\overline{\mu}_{n}\}$ is tight. It is enough to show that, given $j \in \mathbb{N}$, we can find a compact set $K_{j}$ such that  $\overline{\mu}_{n}(K_{j}^{c}) <  {\theta}/{j},$ for all  $n \in \mathbb{N}$. In fact,
		$$
		j \overline{\mu}_{n} (\{\tilde{\tau} > j\}) < \int_{\{\tilde{\tau} > j\}} \tilde{\tau} d \overline{\mu}_{n} < \theta \implies \overline{\mu}_{n}(\{\tilde{\tau} > j\}) < \frac{\theta}{j},\quad \forall n \in \mathbb{N}.
		$$
		It remains to show that $K_{j} = \{\tilde{\tau} \leq j\}$ is compact. Indeed, $K_{j}$ is the union of finitely many cylinders, which are compact sets. So, the set $\{\overline{\mu}_{n}\}$ is tight, as  claimed.  
	\end{proof}
	
	Let $\tau$ be the return time associated with the inducing scheme $(F,\mathcal{P}_{k}^{\tilde{F}})$ and  $\overline\varphi$ be the lifting of  $\varphi$ to this inducing scheme. It follows from Theorem~\ref{Sarigseveral} that $\overline\varphi$ has a unique equilibrium state $\mu_{\overline\varphi}$, which is a Gibbs measure.
	
	    \begin{lemma}\label{integrable}
	    	 $\tau$  is integrable with respect to $\mu_{\overline{\varphi}}$. 
	    \end{lemma} 
        \begin{proof}	
	 By   Proposition \ref{Gibbs}, there exists $K'>0$ such that, given a cylinder $C_{n} \in \mathcal{P}_{n}^{F}$, there exists $k_{n} \in \mathbb{N}$ such that, for all $k \geq k_{n}$, we have
		$$
		\frac{1}{K'} \leq \frac{\overline{\mu}_{k}(C_{n})}{e^{S_{k}\overline{\varphi}(x)}} \leq K', \quad \forall x \in C_{n}.
		$$
		By Lemma \ref{Tight}, the set $\{\overline{\mu}_{n}\}$ is tight and so, by Prohorov Theorem, it has a subsequence converging in the weak* topology, which we keep writing $\{\overline{\mu}_{n}\}_{n}$ with no loss of generality. 
		It is not difficult to see that the limit is a Gibbs measure and, by uniqueness, it must be the measure $\mu_{\overline{\varphi}}$.
		
		Now, we   see that the inducing time $\tilde{\tau}$ is integrable with respect to $\mu_{\overline{\varphi}}$. 
		First of all, recall that $F = \tilde{F}^{k}$ and we may have two lifts of $\mu_n$, depending on whether we consider the inducing scheme $(\tilde F,\mathcal P)$ or~$(F,\mathcal P_k)$.  Let $\overline{\mu}_{F,n}$ denote the lift  with respect to  $F$ and $\overline{\mu}_{\tilde{F},n}$ denote  the lift  with respect to   $\tilde{F}$. We have
		$\int \tau d \overline{\mu}_{F,n} = \int \tilde{\tau}^{k} d \overline{\mu}_{\tilde{F},n} \leq \theta k$. Setting $\tau_{N} = \min\{\tau,N\}$, by the Monotone Convergence Theorem, we obtain		
		$$
		\displaystyle
		\int \tau d \mu_{\overline{\varphi}} =
		\lim_{N \to \infty} \int \tau_{N} d \mu_{\overline{\varphi}} \leq
		\lim_{N \to \infty} \limsup_{n \to \infty}\int \tau_{N} d \overline{\mu}_{F,n} \leq \theta k.
		$$
		This gives that $\tau$ is integrable with respect to $\mu_{\overline{\varphi}}$.
		\end{proof}
		
		\begin{proof}[Proof of Proposition~\ref{Finitely}]
		Since $\tau$ is integrable with respect to $\mu_{\overline{\varphi}}$, by Lemma \ref{integrable}, we can project the Gibbs measure $\mu_{\overline{\varphi}}$ and obtain an invariant measure $\mu$ for $f$. By the Abramov's formulas we   see that $\mu$ is an equilibrium state for the system $(f,\varphi)$.	
		As there exists an equilibrium state, we also can find an ergodic one. Also, if $\nu$ is an ergodic equilibrium state, we can lift it to an equilibrium state for the shift, which is the Gibbs measure $\mu_{\overline{\varphi}}$. So, the projection of it is $\nu$. This shows that there exists at most one ergodic equilibrium state for each inducing scheme. Then, there are  finitely many ergodic equilibrium states for the system $(f,\varphi)$. 
	\end{proof}
	
\section{Hyperbolic and Expanding Potentials}

In this section, we prove Theorems \ref{B} and \ref{C}. We will call the $(\sigma,\epsilon)$-hyperbolic potentials simply as hyperbolic potentials and we call the $(\sigma,\epsilon)$-expanding potentials simply as expanding potential, since we assume that $\sigma$ and $\epsilon$ are previously fixed. We show that class of expanding potentials is equivalent to the class of hyperbolic potentials. This class was inspired by the work \cite{PV}, where the authors consider a type of potential that they call expanding and also announced the uniqueness of measure of maximal entropy for Viana maps. Moreover, by using a result for Viana maps in \cite{ALP}, where the authors announced the existence of countably many ergodic measures of maximal entropy for Viana maps, we show that the null potential is expanding if the $SRB$ measure is not a measure of maximal entropy, and so it is hyperbolic, giving finitely many ergodic measures of maximal entropy. Also, it implies that the potentials whose integral with respect to every invariant measure vanishes are expanding (and hyperbolic). If the SRB measure is a measure of maximal entropy, we can also use the strategy of the proof of Theorem \ref{A} to obtain finitely many ergodic measures of maximal entropy, having the SRB measure in the finite set of ergodic measures of maximal entropy. 

We recall the definition of an expanding potential (subsection \ref{expanding}). Let $\phi:M \to \mathbb{R}$ be a continuous potential. We say that $\phi$ is an expanding potential if the following inequality holds
\[
\displaystyle \sup_{\mu \in \mathcal{E}^{c}} \bigg{\{} h_{\mu}(f) + \int \phi d \mu \bigg{\}} < \sup_{\mu \in \mathcal{E}} \bigg{\{} h_{\mu}(f) + \int \phi d \mu \bigg{\}},	
\]
where $\mathcal{E}$ denotes the set of all expanding measures for $f$. We observe that Proposition \ref{pr.hypmeas} shows that every hyperbolic potential is an expanding potential. Since for an expanding potential $\phi$ we can find a sequence $\mu_{n}$ of expanding measures such that   $h_{\mu_{n}}(f) + \int \phi d \mu_{n} \to P_{f}(\phi)$, if $\phi$ is also H\"older, we also can find finitely many ergodic equilibrium states for~$\phi$, which are expanding measures. In fact, all the proof is similar from the point where Proposition \ref{pr.hypmeas} is proved. We intend to prove the following result.

\begin{theorem}\label{HYPERZOOM}
	Let $f:M \to M$ be a continuous map with an expanding measure and $\phi : M \to \mathbb{R}$ a continuous potential. Then $\phi$ is   hyperbolic   if, and only if, it is   expanding. In particular, if $\phi$ is hyperbolic and H\"older continuous with   $P_{f}(\phi)<\infty$, then it has finitely many equilibrium states which are expanding measures.
\end{theorem}

Denote   $\mathcal{HP}$ the set of hyperbolic potentials and $\mathcal{EP}$ the set of expanding potentials. We divide the proof of Theorem \ref{HYPERZOOM} into some lemmas. The first lemma gives that both the sets of hyperbolic and expanding potentials are open in the topology of the supremum norm. We use the following property of the relative pressure given in \cite{Pe2}: for any continuous potentials $\phi, \varphi: M \to \mathbb{R}$ and   $X \subset M$, we have
	\begin{equation}\label{eq.pesin}
|P(\phi,X) - P(\varphi,X)| \leq \| \phi - \varphi \|_{\infty}.
\end{equation}

\begin{lemma} \label{OPEN}
	  $\mathcal{HP}$ and $\mathcal{EP}$ are open in the topology of the supremum norm $\| \cdot \|_{\infty}$. 
\end{lemma}
\begin{proof}
	First, we prove that  $\mathcal{HP}$ is open. Assuming that $\phi$ is hyperbolic and denoting $\Lambda  = H(\sigma,\epsilon)$, we have   $P(\phi,\Lambda^{c}) < P(\phi, \Lambda)$. Take $\epsilon>0$ such that $0 < 2\epsilon < P(\phi, \Lambda) - P(\phi,\Lambda^{c})$ and $\varphi$ such that $\| \phi - \varphi \|_{\infty} < \epsilon$. Using~\eqref{eq.pesin}, we get 
\begin{align*}
P(\varphi, \Lambda) - P(\varphi,\Lambda^{c}) 
&= 
(P(\varphi, \Lambda) - P(\phi,\Lambda)) + (P(\phi,\Lambda) - P(\phi,\Lambda^{c})) + (P(\phi,\Lambda^{c}) - P(\varphi,\Lambda^{c})) \\
&> 
	-\| \phi - \varphi \|_{\infty} + P(\phi,\Lambda) - P(\phi,\Lambda^{c}) - \| \phi - \varphi \|_{\infty}\\
	& > P(\phi,\Lambda) - P(\phi,\Lambda^{c}) - 2\epsilon > 0.
\end{align*}
	This implies that $\varphi$ is hyperbolic and, as a consequence, we have the ball $B_{\epsilon}(\phi) \subset \mathcal{HP}$. This shows that the set $\mathcal{HP}$ is open.	
	In order to show that the set $\mathcal{EP}$ is open, we assume that the potential $\phi$ is expanding, that is,
	\[
	\displaystyle  \sup_{\eta \in \mathcal{E}^{c}} \bigg{\{} h_{\eta}(f) + \int \phi d\eta \bigg{\}} < \sup_{\eta \in \mathcal{E}} \bigg{\{} h_{\eta}(f) + \int \phi d\eta \bigg{\}}.
	\]
	So, taking
	\[
	\displaystyle 0 < 2\epsilon < \sup_{\eta \in \mathcal{E}} \bigg{\{} h_{\eta}(f) + \int \phi d\eta \bigg{\}} - \sup_{\eta \in \mathcal{E}^{c}} \bigg{\{} h_{\eta}(f) + \int \phi d\eta \bigg{\}},
	\]
	and $\varphi$ such that $\| \phi - \varphi \|_{\infty} < \epsilon$ we have
	\begin{eqnarray*}
  \sup_{\eta \in \mathcal{E}}  \left\{ h_{\eta}(f) + \int \varphi d\eta \right\} \!&-& \!\sup_{\eta \in \mathcal{E}^{c}} \bigg{\{} h_{\eta}(f) + \int \varphi d\eta \bigg{\}}   
	\\
		&=&\sup_{\eta \in \mathcal{E}} \bigg{\{} h_{\eta}(f) + \int \varphi d\eta \bigg{\}} - \sup_{\eta \in \mathcal{E}} \bigg{\{} h_{\eta}(f) + \int \phi d\eta \bigg{\}} 
		\\
		&& +
	   \sup_{\eta \in \mathcal{E}} \bigg{\{} h_{\eta}(f) + \int \phi d\eta \bigg{\}} - \sup_{\eta \in \mathcal{E}^{c}} \bigg{\{} h_{\eta}(f) + \int \phi d\eta \bigg{\}} \\
	&&	+\sup_{\eta \in \mathcal{E}^{c}} \bigg{\{} h_{\eta}(f) + \int \phi d\eta \bigg{\}} - \sup_{\eta \in \mathcal{E}^{c}} \bigg{\{} h_{\eta}(f) + \int \varphi d\eta \bigg{\}} \\
	&>&   
	  \sup_{\eta \in \mathcal{E}} \bigg{\{} h_{\eta}(f) + \int \phi d\eta \bigg{\}} - \sup_{\eta \in \mathcal{E}^{c}} \bigg{\{} h_{\eta}(f) + \int \phi d\eta \bigg{\}} - 2\epsilon > 0.
\end{eqnarray*}
We used that $\| \phi - \varphi \|_{\infty} < \epsilon$ implies $\int \varphi d\eta - \int \phi d\eta > -\epsilon$ and also $\int \phi d\eta - \int \varphi d\eta > -\epsilon$. Thus, we have that $\varphi$ is expanding and, as a consequence, we have the ball $B_{\epsilon}(\phi) \subset \mathcal{EP}$. It shows that the set $\mathcal{EP}$ is open.
\end{proof}

Recall that Proposition \ref{pr.hypmeas} guarantees that $\mathcal{HP} \subset \mathcal{EP}$. In order to prove Theorem~\ref{HYPERZOOM} it remains to show that $\mathcal{EP} \subset \mathcal{HP}$. In the next lemma, we show this by showing that $\mathcal{EP} \subset \overline{\mathcal{HP}}$ and using that $\mathcal{HP} \subset \mathcal{EP}$ are open sets.

%
%

\begin{lemma}\label{ZOOMHYPER}
	We have that $\mathcal{EP} \subset \overline{\mathcal{HP}}$. 
\end{lemma}
\begin{proof}
	Let $\phi \in \mathcal{EP}$ be a H\"older potential, which exists because $\mathcal{EP}$ is open and the set of H\"older functions are dense in the set of continuous functions. By Theorem \ref{A}, there exist finitely many ergodic equilibrium states, which are also expanding measures. Let $\mu_{0}$ be an equilibrium state. If $\phi \in \mathcal{EP} \backslash \mathcal{HP}$, denoting $\Lambda:=H(\sigma,\epsilon)$ we have the following inequalities
	\[
	P(\phi,\Lambda) \leq P(\phi,\Lambda^{c}) = P(\phi) = h_{\mu_{0}} + \int \phi d\mu_{0} \leq \sup_{\eta \in \mathcal{E}} \bigg{\{} h_{\eta}(f) + \int \phi d\eta \bigg{\}} \leq P(\phi,\Lambda),  
	\]
	because $\mu_{0}$ is expanding and the last inequality is a consequence of  \cite[Theorem A2.1]{Pe2}. Hence, we obtained that $P(\phi,\Lambda) = P(\phi,\Lambda^{c}) = P(\phi)$. Since by \eqref{eq.pesin} we have that the relative pressure is continuous, it follows that $\overline{\mathcal{HP}}$ is the set of continuous potentials $\varphi$ such that $P(\varphi,\Lambda^{c}) \leq P(\varphi,\Lambda)$. We conclude that $\phi \in \overline{\mathcal{HP}}$. Then, every H\"older expanding potential belongs  in $\overline{\mathcal{HP}}$. By denseness of the H\"older functions, we have   $\mathcal{EP} \subset \overline{\mathcal{HP}}$, as desired.
\end{proof}

By Lemma \ref{ZOOMHYPER}, we have that $\mathcal{EP} \subset \overline{\mathcal{HP}}$. Since $\mathcal{EP}$ is an open set, we conclude that $\mathcal{EP} \subset \mathcal{HP}$ and, finally, $\mathcal{EP} = \mathcal{HP}$, concluding the proof of Theorem \ref{HYPERZOOM}.

\begin{proposition}\label{ZER}
	Let $f:S^{1} \times I \to S^{1} \times I$ be a Viana map. If the unique $SRB$ measure is not a measure of maximal entropy, then the null potential is expanding and, by Theorem~\ref{HYPERZOOM} it is hyperbolic. 
\end{proposition}

In particular, there exist finitely many ergodic measures of maximal entropy. If the SRB measure is a measure of maximal entropy, we can also use the strategy of the proof of Theorem \ref{A} to obtain this, having the SRB measure in the finite set of ergodic measures of maximal entropy.

\begin{proof}
	In \cite[Proposition 12.2]{ALP} the authors establish that for Viana maps we have the following result, among others: if $\mu$ is an $f$-invariant measure such that $h_{\mu}(f) \geq h_{SRB}(f)$ where $SRB$ denotes the unique SRB measure for Viana maps, then the measure $\mu$ is hyperbolic. It implies that
	\[
	\sup_{\nu \in \mathcal{E}^{c}}\{h_{\nu}(f)\} \leq \sup_{\mu \in \mathcal{E}}\{h_{\mu}(f)\} = h(f).
	\]
	If the inequality is strict, it means that the null potential is expanding and Theorem \ref{HYPERZOOM} establishes the result in this case. Otherwise, we still can take a sequence of expanding measures $\mu_{n}$ such that $h_{\mu_{n}}(f) \to h(f)$ and the proof to find equilibrium states proceeds analogously. We then obtain finiteness of ergodic measures of maximal entropy in any case.
\end{proof}

\begin{remark}
	We   improve a result in \cite{ALP}, where the authors prove that for Viana maps there exist at most countably many ergodic measures of maximal entropy.
\end{remark}

\begin{corollary}\label{BIR}
	Let $\phi: M \to \mathbb{N}$ with its Birkhoff sums uniformly bounded, that is, there exists $r > 0$ such that
	\[
	|S_{n}\phi(x)| < r,\quad \forall n \in \mathbb{N} ,\;\forall x \in M.
	\]
	If the null potential is expanding, then $\phi$ is also expanding (and hyperbolic).
\end{corollary}
\begin{proof}
	By Birkhoff's Ergodic Theorem we have that $\int \phi d\eta = 0$ for every $f$-invariant probability $\eta$. By Proposition \ref{ZER} the null potential is expanding if the SRB measure is not a measure of maximal entropy. It implies that 
	\[
	\displaystyle  \sup_{\eta \in \mathcal{E}^{c}} \bigg{\{} h_{\eta}(f) + \int \phi d\eta \bigg{\}} = \sup_{\eta \in \mathcal{E}^{c}} \bigg{\{} h_{\eta}(f) \bigg{\}} < \sup_{\eta \in \mathcal{E}} \bigg{\{} h_{\eta}(f) \bigg{\}} = \sup_{\eta \in \mathcal{E}} \bigg{\{} h_{\eta}(f) + \int \phi d\eta \bigg{\}}.
	\]  
	So, $\phi$ is an expanding (and hyperbolic) potential.
\end{proof}

\begin{remark}
	The previous lemma proved that the potential $\phi$ is expanding (and hyperbolic) if we have $\int \phi d \eta = 0$ for every invariant measure $\eta$. In particular, if we have the Birkhoff sums uniformly bounded. Moreover, a similar result can be obtained if $f:M \to M$ has an expanding measure and the null potential is expanding (and hyperbolic).
\end{remark}

\bigskip

	\paragraph{\bf Acknowledgement}{The authors would like to thank Mike Todd for pointing out a mistake in the proof of Proposition~\ref{pr.todd}  in a preliminary version of the text. Also, we would like to thank Godofredo Iommi for clarifying a key step in a proof of   \cite{IT2}}. We also thank anonymous referees for their valuable comments to improve our manuscript.

\end{document}